\newtheorem{theorem}{Theorem}[section]
\newtheorem{proposition}[theorem]{Proposition}
\newtheorem{lemma}[theorem]{Lemma}
\theoremstyle{definition}
\theoremstyle{remark}
\newtheorem{remark}[theorem]{Remark}
\numberwithin{equation}{section}
\newcommand{\h}{\hbar}
\newcommand{\bbR}{{\mathbb R}}
\newcommand{\calL}{{\mathcal L}}
\newcommand{\calR}{{\mathcal R}}
\newcommand{\calH}{{\mathcal H}}
\DeclareMathOperator{\tr}{Tr}
\newcommand{\norm}[1]{\left\Vert #1 \right\Vert}
\begin{document}


\title{Canonical forms for perturbations of the harmonic oscillator}
\author{V. Guillemin}\address{Department of Mathematics\\
Massachusetts Institute of Technology \\Cambridge, MA 02139 \\
USA}\thanks{V. Guillemin is supported in part by NSF grant
DMS-1005696.}\email{vwg@math.mit.edu}
\author{A. Uribe}\address{Department of Mathematics \\
University of Michigan \\ Ann Arbor \\ MI \\ 48109 \\ USA}
\thanks{A. Uribe is supported in part by NSF grant DMS-0805878.}
\email{uribe@umich.edu}
\author{Z. Wang}
\address{Department of Mathematics \\
University of Science and Technology of China \\ Hefei \\ Anhi \\ 230026 \\ China} \email{wangzuoq@ustc.edu.cn}

\date{}

\begin{abstract}
We consider a  class of perturbations of the 2D harmonic oscillator, and of
some other dynamical systems, which  
we show are isomorphic to a function of a toric system (a Birkhoff canonical form).   
We show that for such systems there exists a quantum normal form 
as well, which is determined by spectral data.
\end{abstract}

\maketitle
\tableofcontents

\section{Introduction}

This paper is a sequel to the paper \cite{GUW2}, in which we obtained inverse spectral 
results for the perturbed 2D harmonic oscillator by means of Moser averaging 
techniques.  In this paper we will show that these techniques can be also used to 
obtain classical and quantum Birkhoff canonical forms for the perturbed harmonic
oscillator, under certain conditions, 
and to show that these canonical forms are spectrally determined.  We
will also discuss other systems to which the present techniques apply.

In section 2 we will consider completely integrable hamiltonian systems on a symplectic 
four manifold $M$ for which one of the hamiltonians is the generator of a free 
circle action and for which the symplectic reductions of $M$ by this action are 
topologically two-spheres.  We will show that if the reduction of the second hamiltonian is
a perfect Morse function on all these two-spheres then one can conjugate this
integrable system to a very simple normal form.  (This result is in some sense
a special case of a much more general result of V\~u Ng\d{o}c and Pelayo
on semitoric systems (\cite{PS}), but since the proof is short and intuitive we will give it in its
entirety, which will allow us to sharpen it a little.)

In section 3 and 4 we will apply this result to the perturbed quantum harmonic oscillator and describe
an inductive scheme for passing from the classical Birkhoff canonical form for the
symbol of the perturbation to a quantum Birkhoff canonical form for the quantum
hamiltonian itself.

In \S 5 we will show that if $\lambda_i(\h,n)$, $1\leq i\leq n$, are the eigenvalues
of the perturbed harmonic oscillator in an interval
\[
[\h(n+1)-c\h^2, \h(n+1) + c\h^2]
\]
and $\mu_{n,\h}$ the spectral measure
\begin{equation}\label{sm}
\mu_{n,\h}(\rho) = \sum_i\rho\left(\frac{\lambda_i(n,\h)-\h(n+1)}{\h^2}\right)
\end{equation}
for $\rho\in C^\infty_0(\bbR)$, then as $\h$ tends to zero with $\h(n+1)$ fixed, 
(\ref{sm}) admits an asymptotic expansion in powers of $\h$ in which the terms in the 
expansion are expressible in terms of the quantum Birkhoff canonical form and also,
in turn, determine it.  (Hence in particular the quantum Birkhoff canonical form is itself
a spectral invariant.)

In \S 6 we will describe some inverse spectral results for perturbations of the
systems above for which the reduced averaged perturbations are not perfect Morse.
More explicitly let $P$ be the quantum harmonic oscillator and $Q_\h$ a perturbation
of $P$ by a semiclassical $\Psi$DO of degree $(-2)$ (e.g. $\h^3D_v + \h^2 V$
where $v$ is a vector field and $V$ a potential function) and consider spectral 
invariants of $P+Q_\h$ of the form
\begin{equation}\label{tr}
\tr\left(f(P) g(Q_\h^{\text{ave}})\right).
\end{equation}
We will then show that spectral invariants of this type are related to ``latitudinal"
Birkhoff canonical forms modulo support conditions on $f$ and $g$.  More precisely,
there is a simple canonical form for $P$ microlocally on regions $\sigma_P^{-1}(a,b)$ , provided 
$\sigma(Q_\h^{\text{ave}})$ does not have critical points on the region
\[
a< \sigma(Q_\h^{\text{ave}}) < b
\]
and its level sets are connected.  These canonical forms can be applied to the
study of (\ref{tr}) provided the support of $f$ is contained in $(a,b)$.

The techniques of this paper apply more or less verbatim to a number of other
quantum hamiltonians of Moser type, and in \S 7 we will describe two such examples.
The first is the standard Laplacian on the two sphere.
We will show that for perturbations of the form
\[
\Delta_{S^2} + Q_\h,
\]
where $Q_\h$ is a zeroth-order semiclassical pseudodifferential operator, there exist
a large class of $Q_\h$ for which the results of section 2 apply, and hence also
the theorems of \S 3 and 4 apply.  In particular we will show that if $D$ is a vector
field that is $C^2$ close to the vector field $x_1 \partial_{x_2} - x_2\partial_{x_1}$
these theorems can be applied to $Q_h = \frac{\h}{i} D$.
Our second example is also a Laplace operator on the two sphere:  the
Landau Laplacian associated with a constant magnetic field.
For the classical spherical Landau hamiltonian the periodic trajectories are 
circles of radius $\rho \leq \pi/2$ where $\rho$ depends on the strength of
the magnetic field.  When we perturb this system by a potential $V\in C^\infty(S^2)$,
the second hamiltonian of the completely integrable system,
in this case, is essentially the average of $V$ over such circles.  If $V$ is perfect
Morse, this averaged function will also be perfect Morse for $\rho$ small enough.
Hence the results of \S 2 (and consequently the results of \S 3 and 4) apply 
to perturbations of the Landau laplacian of the form $V$.

\section{From a semi-toric system to a toric system}


Let $(X,\omega)$ be a connected four-dimensional symplectic manifold and 
\[H,\, W: X\to\bbR\]
two Poisson-commuting hamiltonians.  We will make throughout the following assumption:
\begin{equation*}
\tag{A} H\ \text{generates a free } S^1\ \text{action on } X\ \text{and the reduced
spaces are 2-spheres.}
\end{equation*}

\newcommand{\slambda}{S^2_\lambda}
Let us denote by $I\subset\bbR$ an open interval in the image of $H$, and for any $\lambda\in I$
let 
\[
\slambda = H^{-1} (\lambda)/S^1
\]
be the reduced space, with symplectic form $\omega_\lambda$.  
We will denote by 
\[
W^\lambda: \slambda\to\bbR 
\] 
the hamiltonian obtained by reducing the function $W$ at $H=\lambda$.

We will let $M=H^{-1}(I)$ and make the following assumption:
\begin{equation*}
\tag{PM}  \forall \lambda\in I, \quad W^\lambda\ \text{is a perfect Morse function}.
\end{equation*}
(In \S 6 we discuss what can be said at this point without this assumption.)

Therefore for any $\lambda\in I$ the function $W^\lambda$ has exactly two 
critical points and they are non-degenerate:  A global maximum and a global minimum. 
We will denote
\[
a_\lambda = \min W^\lambda \quad\text{and}\quad b_\lambda =\max W^\lambda.
\]
It is clear that the image of the completely integrable system map
\begin{equation}
(H, W): M\to\bbR^2
\end{equation}
is the contractible region
\[
\calR = \{ (\lambda, w)\in\bbR^2\; ;\; \lambda\in I,\ a_\lambda\leq w \leq b_\lambda\}.
\]

 \begin{center}
 \begin{tikzpicture}[scale=.7]
 
 \draw[thick, ->](0, 0)--(8, 0);
 \draw[thick, ->](0, -2)--(0, 2);
\draw[thick, smooth] plot coordinates
{(1,.5)(2,1)(3,1.3)(4,1.6)(5,1.7)(6,2)};
\draw[thick, smooth] plot coordinates
{(1,-.3)(2,-.9)(3,-1.2)(4,-1.5)(5,-1.8)(6,-2)};
 \draw[thick, smooth] plot coordinates
{(1,.5)(1,-.3)};
  \draw[thick, smooth] plot coordinates
{(6,2)(6,-2)};
\node at (3.5, .6) {$\calR$};
\node at (8.2, .1) {$\lambda$};
\node at (0.1, 2.2) {$w$};
\node at (4.6, 2.2) {$w=b_\lambda$};
\node at (4.6, -2.2) {$w=a_\lambda$};
\end{tikzpicture} 
\end{center}  

Our main goal in this section is to prove the following:
\begin{theorem}\label{2torusAction}
There exists a function $\Phi\in C^\infty(\calR)$ such that the map
\begin{equation}
(H, \Phi(H, W)): M\to\bbR^2
\end{equation}
is the moment map of a Hamiltonian $S^1\times S^1$ torus action on $M$.
\end{theorem}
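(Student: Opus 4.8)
The plan is to produce $\Phi$ explicitly as a normalized symplectic area and then to recognize $\Phi(H,W)$ as the second action variable of the integrable system $(H,W)$. For $(\lambda,w)\in\calR$ set $A(\lambda,w)=\int_{\{W^\lambda\le w\}}\omega_\lambda$, the symplectic area of the sublevel set inside the reduced sphere $\slambda$, and look for $\Phi$ of the form $\Phi(\lambda,w)=\frac{1}{2\pi}A(\lambda,w)+g(\lambda)$, with the function $g$ to be pinned down later. Since $W^\lambda$ is perfect Morse, for each fixed $\lambda$ the sublevel set is a disk and $A(\lambda,\cdot)$ increases from $0$ at $w=a_\lambda$ to the total area of $\slambda$ at $w=b_\lambda$; in particular $\partial_w\Phi>0$, so $\Phi(\lambda,\cdot)$ is a diffeomorphism onto its image. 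The theorem then reduces to showing that $\Phi(H,W)$ generates a $2\pi$-periodic Hamiltonian flow on $M$ commuting with the circle action generated by $H$.

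First I would argue fiberwise on the reduced spheres. On $\slambda$ the Hamiltonian flow of $W^\lambda$ has the regular level sets $\{W^\lambda=w\}$ as circular orbits of some period $T(\lambda,w)$, and the classical period--area relation gives $T(\lambda,w)=\partial_w A(\lambda,w)$. Writing $\Phi^\lambda=\Phi(\lambda,\cdot)\circ W^\lambda$ and using the chain rule for Hamiltonian vector fields, $X_{\Phi^\lambda}=\partial_w\Phi\,X_{W^\lambda}=\frac{T(\lambda,w)}{2\pi}X_{W^\lambda}$, so the flow of $\Phi^\lambda$ runs $T/2\pi$ times as fast and its orbits close up in time exactly $2\pi$. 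Thus $\Phi^\lambda$ generates an honest rotation $S^1$-action on $\slambda$ fixing the two poles $w=a_\lambda,b_\lambda$, independently of the choice of $g$.

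A technical point is the smoothness of $\Phi$ up to the boundary curves $w=a_\lambda$ and $w=b_\lambda$, where the level circles collapse onto the nondegenerate critical points of $W^\lambda$. Here I would invoke the Morse lemma with parameters to put $W^\lambda$ into a fixed quadratic normal form near each extremum, depending smoothly on $\lambda$; the enclosed area is then a smooth function of $w$ all the way to $w=a_\lambda$, with $\partial_w A$ extending to the finite positive period of the linearized flow. This is exactly where assumption (PM) is used, and it yields $\Phi\in C^\infty(\calR)$ together with the non-vanishing of $\partial_w\Phi$ up to the edges, which is what the toric picture requires along the two boundary curves.

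Finally I would lift to $M$. Because $\PB{H}{W}=0$, the function $\Phi(H,W)$ is invariant under the $H$-action and $X_{\Phi(H,W)}=(\partial_\lambda\Phi)(H,W)\,X_H+(\partial_w\Phi)(H,W)\,X_W$ commutes with $X_H$ and projects to the rotation $X_{\Phi^\lambda}$ on each $\slambda$. The subtlety, which I expect to be the main obstacle, is that the time-$2\pi$ map of this flow, while covering the identity on $\slambda$, may differ from the identity on $M$ by a fibre rotation $\rho_{h(\lambda,w)}$ of the principal bundle $H^{-1}(\lambda)\to\slambda$; adding $g(\lambda)$ to $\Phi$ shifts this holonomy by $2\pi g'(\lambda)$, so one must choose a single $g$, depending on $\lambda$ alone, that cancels it simultaneously for all $w$. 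That such a $g$ exists I would establish either by identifying $\Phi$ with the action integral $\frac{1}{2\pi}\oint_{\gamma}\alpha$, $d\alpha=\omega$, over a complementary cycle $\gamma$ and appealing to Arnold--Liouville theory together with the contractibility of $\calR$ (so there is no monodromy), or, more in the spirit of the present setup, by fixing $g$ through the requirement that the resulting action be smooth along the degenerate polar circles $\{W=a_\lambda\}$ and $\{W=b_\lambda\}$, where $X_W=0$ and the torus orbit degenerates to the $H$-circle. Once $g$ is so chosen, $X_{\Phi(H,W)}$ has period $2\pi$, commutes with the period-$2\pi$ flow of $H$, and is independent of it on the interior, so $(H,\Phi(H,W))$ is the moment map of a Hamiltonian $S^1\times S^1$ action on $M$.
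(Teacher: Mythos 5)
Your fiberwise construction is the same as the paper's: your sublevel-area function $A(\lambda,w)$ is (up to the $1/2\pi$ normalization) an antiderivative in $w$ of the pushforward density $\rho_\lambda$ of the paper's first lemma, your period--area relation $T=\partial_w A$ is the action--period relation invoked in Proposition \ref{Varphi} (where the paper takes $\varphi_\lambda'=\rho_\lambda$), and your Morse-lemma-with-parameters argument at the poles is exactly how that lemma is proved. The genuine divergence is at the step you yourself flag as the main obstacle: showing that the time-$2\pi$ map of $X_{\Phi(H,W)}$, a fiber rotation by some angle $h(\lambda,w)$ of the bundle $H^{-1}(\lambda)\to S^2_\lambda$, has $h$ independent of $w$, so that a single correction $g(\lambda)$ with $2\pi g'=-h$ works for all $w$ simultaneously. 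The paper settles this by a short, self-contained computation: in coordinates $(x_1,x_2,H,\theta)$ adapted to the free circle action the time-$2\pi$ map is $(x_1,x_2,H,\theta)\mapsto(x_1,x_2,H,\theta+\chi)$, and symplecticity forces $dH\wedge d\chi=0$, i.e.\ $\chi=\chi(H)$. Your route (a) instead identifies $\Phi$ (for the right $g$) with the action variable of a complementary cycle and cites Arnold--Liouville/Duistermaat theory over the simply connected interior of $\calR$; this is sound---since the $w$-derivative of that action is $T/2\pi=\partial_w(A/2\pi)$, the difference is automatically a function of $\lambda$ alone, and action variables generate $2\pi$-periodic flows---but it is precisely the ``special case of general semitoric/action-angle theory'' reading that the paper acknowledges (via Pelayo--V\~u Ng\d{o}c) and deliberately replaces by an elementary argument; what the paper's computation buys is self-containedness, what yours buys is outsourcing the key step to standard machinery. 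Two cautions. First, writing the action as $\frac{1}{2\pi}\oint_\gamma\alpha$ with $d\alpha=\omega$ presumes exactness, which assumption (A) does not guarantee on $M$ (the trivial bundle $M\cong I\times S^1\times S^2$ with product symplectic form is allowed); one must either work on the regular part, where $[\omega]$ does vanish because the fibers are Lagrangian tori, or define actions by integrating $\omega$ over cylinders. Second, your fallback (b) cannot stand on its own: $\frac{1}{2\pi}A+g$ is smooth on $\calR$ for \emph{every} smooth $g$, and the condition that the flow be well behaved at the polar circles only pins things down once the $w$-independence of $h$ is already known, so (b) complements the crux rather than proving it; making it self-sufficient would require an Eliasson-type normal form near the elliptic orbits, again nontrivial machinery. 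Finally, a point in your favor: your holonomy bookkeeping (adding $g$ shifts the time-$2\pi$ rotation by $2\pi g'$) is the precise one, whereas the paper's closing formula $\Phi=F-\chi(H)$ elides an antiderivative---the correction should be $F-\psi(H)$ with $2\pi\psi'=\chi$.
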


We start by proving the following lemma:
\begin{lemma}
For any $\lambda\in I$ the push-forward measure $(W^\lambda)_* |\omega_\lambda|$,
where $|\omega_\lambda|$ is Liouville measure on $\slambda$,
is of the form
\[
(W^\lambda)_* |\omega_\lambda| = \rho_\lambda(w)\, dw
\]
where $\rho_\lambda \in C^\infty([a_\lambda, b_\lambda])$ is positive  everywhere,
and $dw$ is Lebesgue measure.  Moreover, $\rho$ is smooth as a function 
of $(\lambda, w)$.
\end{lemma}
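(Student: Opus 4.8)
The plan is to identify the density $\rho_\lambda$ with a geometric quantity---the period of the Hamiltonian flow of $W^\lambda$---on the regular part of the energy range, and then to analyze separately the two critical values $a_\lambda,b_\lambda$, where the level curves collapse to the nondegenerate extrema. First I would note that by assumption (PM) the only critical values of $W^\lambda$ are $a_\lambda$ and $b_\lambda$, so for every $w\in(a_\lambda,b_\lambda)$ the level set $C_w=(W^\lambda)^{-1}(w)$ is a single smooth circle on which the Hamiltonian vector field $X_{W^\lambda}$ is nonvanishing and tangent. Parametrizing $C_w$ by the flow time $t$ of $X_{W^\lambda}$ and using $w$ as the transverse coordinate, Hamilton's equation $\iota_{X_{W^\lambda}}\omega_\lambda=-dW^\lambda$ gives $\omega_\lambda=dw\wedge dt$ on the regular region; pushing $|\omega_\lambda|=dw\,dt$ forward by $W^\lambda$ then yields $(W^\lambda)_*|\omega_\lambda|=T_\lambda(w)\,dw$, where $T_\lambda(w)=\oint_{C_w}dt$ is the primitive period of the flow on $C_w$. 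This simultaneously shows that the push-forward is absolutely continuous with density $\rho_\lambda(w)=T_\lambda(w)$, that $\rho_\lambda$ is smooth on the open interval (the period of a periodic orbit depends smoothly on a regular energy level), and that it is strictly positive there.

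The main work, and the main obstacle, is the behavior at the two endpoints, where $C_w$ shrinks to a nondegenerate critical point and $X_{W^\lambda}$ vanishes. I would treat the minimum $a_\lambda$ (the maximum being identical). Applying the Morse lemma at the minimum $p_\lambda$, I obtain coordinates $(u,v)$ centered at $p_\lambda$ in which $W^\lambda=a_\lambda+u^2+v^2$; in these coordinates the smooth Liouville density reads $|\omega_\lambda|=J(u,v)\,du\,dv$ with $J$ smooth and $J(0)>0$. For $w$ close enough to $a_\lambda$ the level curve lies entirely in this chart, and a direct computation of the Gelfand--Leray form $\sigma_\lambda$ (characterized by $dW^\lambda\wedge\sigma_\lambda=\omega_\lambda$), which represents $\rho_\lambda$, gives, with $c=w-a_\lambda$ and polar coordinates $u=\sqrt c\cos\phi$, $v=\sqrt c\sin\phi$,
\[
\rho_\lambda(w)=\tfrac12\int_0^{2\pi}J\bigl(\sqrt c\,\cos\phi,\ \sqrt c\,\sin\phi\bigr)\,d\phi .
\]
The key point is that expanding $J$ in its Taylor series and integrating in $\phi$ annihilates every odd power of $\sqrt c$, since $\int_0^{2\pi}\cos^i\phi\,\sin^j\phi\,d\phi=0$ whenever $i+j$ is odd. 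Hence the right-hand side is a genuine power series in $c=w-a_\lambda$: $\rho_\lambda$ extends smoothly across the endpoint, with $\rho_\lambda(a_\lambda)=\pi J(0)>0$. (Alternatively one may invoke the symplectic Morse lemma of Colin de Verdi\`ere--Vey/Eliasson to write $W^\lambda=a_\lambda+g(\tfrac12(x^2+y^2))$ in Darboux coordinates, whence $\rho_\lambda=2\pi/g'$ directly; I prefer the averaging argument, as it is self-contained and shows explicitly why nondegeneracy is what makes the limit finite and smooth.)

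Finally, for joint smoothness in $(\lambda,w)$ I would run all of the above in families. The extrema $p_\lambda$ are nondegenerate, so by the implicit function theorem they depend smoothly on $\lambda$, and the parametrized Morse lemma produces coordinates---and hence a density $J_\lambda$ and a critical value $a_\lambda$---depending smoothly on $\lambda$; substituting into the displayed formula, and straightening the boundary by the change of variable $c=w-a_\lambda$, exhibits $\rho$ as a smooth function of $(\lambda,w)$ up to the two boundary curves of $\calR$. On the interior the Gelfand--Leray density depends smoothly on $\lambda$ as well, and patching the interior and endpoint descriptions on their overlap yields a single $\rho\in C^\infty(\calR)$ with each $\rho_\lambda\in C^\infty([a_\lambda,b_\lambda])$ positive everywhere, as claimed. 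The only delicate ingredient throughout is the endpoint smoothness, i.e.\ that the period of the flow has a smooth, positive limit as the orbit collapses onto a nondegenerate elliptic equilibrium; this is precisely where assumption (PM) is used.
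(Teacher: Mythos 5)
Your proof is correct and follows essentially the same route as the paper's: localize at the nondegenerate extrema via the Morse lemma and show that angular (i.e.\ $SO(2)$) averaging of the Liouville density kills the odd powers of $\sqrt{w-a_\lambda}$, so the density extends smoothly and positively across the critical values, with the parametrized Morse lemma giving joint smoothness in $(\lambda,w)$. The paper phrases this as replacing the density by its $SO(2)$-average written as a smooth function of $x^2+y^2$, which is exactly the fact your Taylor-expansion argument establishes; your additional period/Gelfand--Leray discussion on the interior is a supplement the paper leaves implicit.
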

\begin{proof}
We only need to check that $(W^\lambda)_*|\omega_\lambda|$ is of this form in a neighborhood of $a_{\lambda}$. Without loss of generality, we can take $a_\lambda=0$ and choose local coordinates on $S^2_\lambda$, depending smoothly on $\lambda$, so that near the minimum of $W^\lambda$, $W^\lambda(x, y)=x^2+y^2$  and $\omega_\lambda = \omega_\lambda(x, y) dx \wedge dy$ with $\omega_\lambda(0, 0)=1$. Let
\[
h_\lambda(w) = \int_{x^2+y^2 \le w} \omega_\lambda = \int_{x^2+y^2 \le w} \omega_\lambda(x, y)dxdy.
\]
Averaging the integrand with respect to the linear action of $SO(2)$ on $\mathbb R^2$ we can assume 
$\omega_\lambda (x,y)=\omega_\lambda(x^2+y^2)$, in which case
\[
h_\lambda(w) = \pi \int_{r^2 \le w} {\omega_\lambda}(r^2)dr^2,
\]
i.e. 
\[
\rho_\lambda(w) = \frac{dh_\lambda}{dw} = \pi \omega_\lambda(w).
\]
The positivity of $\rho_\lambda$ follows from the fact that $h_\lambda$ is strictly increasing. 
\end{proof}

\begin{proposition}\label{Varphi}
For any $\lambda\in I$ there is a strictly increasing 
function $\varphi_\lambda(w)$ such that 
\[
\varphi_\lambda\circ W^\lambda: \slambda\to\bbR
\]
is the hamiltonian of a circle action.  Moreover $\varphi_\lambda (w)$ is smooth
as a function of $(\lambda, w)$.
\end{proposition}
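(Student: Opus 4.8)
The plan is to use the elementary characterization that a smooth function on a symplectic surface is the hamiltonian of a circle action precisely when its Hamiltonian flow is periodic with one common period on every regular level set and extends smoothly across its critical points as fixed points. Since $W^\lambda$ is perfect Morse with a single maximum and a single minimum, each regular level set $\{W^\lambda=w\}$ is one embedded circle, and the flow of any reparametrization $\varphi_\lambda\circ W^\lambda$ remains on these circles. I would start from the chain rule for Hamiltonian vector fields, $X_{\varphi_\lambda\circ W^\lambda}=(\varphi_\lambda'\circ W^\lambda)\,X_{W^\lambda}$, which shows that on the level set $\{W^\lambda=w\}$ the reparametrized field is the constant multiple $\varphi_\lambda'(w)$ of $X_{W^\lambda}$. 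Hence if $T_\lambda(w)$ denotes the period of $X_{W^\lambda}$ there, the period of the reparametrized flow is $T_\lambda(w)/\varphi_\lambda'(w)$, and the whole problem reduces to choosing $\varphi_\lambda'$ so that this quotient is constant.

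The key computation I would carry out is to identify $T_\lambda(w)$ with the density $\rho_\lambda(w)$ produced by the preceding lemma. Let $A_\lambda(w)=\int_{\{W^\lambda\le w\}}|\omega_\lambda|$ be the enclosed Liouville area. In coordinates $(w,t)$ adapted to the flow, with $t$ flow time and $\partial_t=X_{W^\lambda}$, the defining relation $\iota_{X_{W^\lambda}}\omega_\lambda=-dW^\lambda=-dw$ forces $\omega_\lambda=dw\wedge dt$; integrating over one full period in $t$ then yields $A_\lambda'(w)=T_\lambda(w)$. On the other hand, by the definition of the push-forward measure in the lemma, $A_\lambda(w)=\int_{a_\lambda}^{w}\rho_\lambda(s)\,ds$, so $A_\lambda'(w)=\rho_\lambda(w)$. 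Comparing the two gives $T_\lambda(w)=\rho_\lambda(w)$.

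Given this, I would simply set
\[
\varphi_\lambda(w)=\frac{1}{2\pi}\int_{a_\lambda}^{w}\rho_\lambda(s)\,ds.
\]
Then $\varphi_\lambda'(w)=\rho_\lambda(w)/2\pi$, which is strictly positive by the lemma, so $\varphi_\lambda$ is strictly increasing; moreover the reparametrized period $T_\lambda(w)/\varphi_\lambda'(w)$ is identically $2\pi$ on every regular level set. Joint smoothness of $\varphi_\lambda$ in $(\lambda,w)$ follows from the smoothness of $\rho$ in $(\lambda,w)$ together with the smoothness of $a_\lambda$ in $\lambda$, the latter obtained by the implicit function theorem applied to the nondegenerate minimum of $W^\lambda$.

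The step I expect to be the main obstacle, and the one I would treat most carefully, is the behaviour at the two critical points: there $X_{\varphi_\lambda\circ W^\lambda}$ vanishes, and a flow of constant period on the regular part need not a priori assemble into a \emph{smooth} circle action with honest fixed points. Here the Morse normal form from the lemma does the work. Near the minimum one has $W^\lambda=a_\lambda+x^2+y^2$ with $\omega_\lambda(0,0)=1$, whence $\rho_\lambda(a_\lambda)=\pi$ and $\varphi_\lambda'(a_\lambda)=\tfrac12$, so that $\varphi_\lambda\circ W^\lambda=\text{const}+\tfrac12(x^2+y^2)+O(|(x,y)|^3)$. The Hamiltonian flow of $\tfrac12(x^2+y^2)$ with respect to $dx\wedge dy$ is ordinary rotation of period $2\pi$, exactly matching the regular-level period, so the flow is smooth and $2\pi$-periodic through the minimum; the identical argument at the maximum then shows that $\varphi_\lambda\circ W^\lambda$ is the hamiltonian of a genuine smooth circle action on $\slambda$.
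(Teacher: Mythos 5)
Your proof is correct and is essentially the paper's argument: both rest on the action--period relation to reduce the problem to making the period constant, and both end up taking $\varphi_\lambda'$ proportional to $\rho_\lambda$ (the paper takes $\varphi_\lambda'=\rho_\lambda$, normalizing the push-forward to Lebesgue measure, i.e.\ a period-one circle, while you take $\varphi_\lambda'=\rho_\lambda/2\pi$ for a $2\pi$-periodic action). The only difference is one of self-containedness: you prove the action--period relation and discuss the fixed points explicitly (where, incidentally, continuity of the time-$2\pi$ map on the dense regular set is what really closes the argument, rather than the quadratic model), whereas the paper simply cites the relation.
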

\begin{proof}
By the action-period relation, the condition on $\varphi_\lambda$ is equivalent to
the push-forward measure
$(\varphi_\lambda\circ W^\lambda)_* (|\omega_\lambda|)$ being Lebesgue measure
(restricted to its support).  But 
\[
\int_a^b (\varphi_\lambda\circ W^\lambda)_* (|\omega_\lambda|) = 
\int_{\varphi_\lambda^{-1}(a)}^{\varphi_\lambda^{-1}(b)} \rho_\lambda(w)\, dw
\]
which, by making the change of variables $w=\varphi_\lambda^{-1}(s)$ becomes
(omitting the parameter $\lambda$ for simplicity of notation)
\[
\int_a^b \rho (\varphi^{-1}(s))\, \frac{1}{\varphi'(\varphi^{-1}(s))}\, ds.
\]
Therefore it suffices to take $\varphi'(u) = \rho(u)$.
\end{proof}

With this proposition at hand we can prove Theorem \ref{2torusAction}.  
\begin{proof}[Proof of theorem \ref{2torusAction}]
Consider the Hamiltonian $F:M\to \bbR$ defined by
\[
F(x) = \varphi_{H(x)}(W(x)),
\]
and let $\Xi_F$ be its Hamilton vector field.  Since $H$ and $W$ Poisson commute
$\Xi_F$ is tangent to the level surfaces of $H$, and its integral curves project onto
integral curves of the Hamiltonians $\varphi_\lambda(W^\lambda)$.  Therefore
\[
\exp(2\pi \Xi_F): M\to M
\]
is a symplectic transformation that is ``equal to the identity modulo the circle action 
induced by $H$".  We now make this more precise.

Working locally, let $Y$ be the quotient $Y=H^{-1}(I)/S^1$ and let $h$ be the induced function on $Y$ so that 
\[\pi^* h = H.\]
 Take a local Darboux coordinate system $(y_1, y_2, h)$ on $Y$ so that the Poisson structure on $Y$ is $\partial_{y_1} \wedge \partial_{y_2}$. (For the existence of such coordinates system, c.f. \cite{LPV}).  Let $x_1=\pi^* y_1, x_2=\pi^* y_2$ and let $\theta$ be the local variable corresponding to the $S^1$ action. Then $(x_1, x_2, H, \theta)$ is a local coordinate system on $H^{-1}(I)$ such that 
\[
\{x_1, x_2\}=1, \quad \{x_1, H\}=0=\{x_2, H\}, \quad \{H, \theta\}=1.
\]
It follows that the Poisson bi-vector field is of the form
\[
\pi = \partial_{x_1}\wedge \partial_{x_2} + \partial_H \wedge \partial_\theta + c_1 \partial_{x_1}\wedge \partial_\theta + c_2 \partial_{x_2}\wedge \partial_\theta.
\]
Inverting the coefficient matrix, 
\[
\begin{pmatrix}
0 & 1 & 0 & c_1 \\
-1 & 0 & 0 & c_2 \\
0 & 0 & 0 & 1\\
-c_1 & -c_2 & -1 & 0
\end{pmatrix}^{-1}
=
\begin{pmatrix}
0 & -1 & c_2 & 0 \\
1 & 0 & -c_1 & 0 \\
-c_2 & c_1 & 0 & -1\\
0 & 0 & 1 & 0
\end{pmatrix},
\]
we see that the symplectic form in this coordinate system is of the form 
\[
\omega = dx_1 \wedge dx_2 + f_1 dx_1 \wedge dH  + f_2 dx_2 \wedge dH + dH \wedge d\theta. 
\]
Clearly the coefficients $f_1, f_2$ are only functions of $x_1, x_2, H$, i.e. they are independent of $\theta$. 

Consider now $\exp(2\pi \Xi_F): M \to M$. This is a symplectic transformation that preserves the coordinate 
functions $x_1, x_2$ and $H$.  Therefore in the previous coordinates
this transformation must take the form 
\[
(x_1, x_2, H, \theta) \mapsto (x_1, x_2, H, \theta+\chi)
\]
for some smooth function $\chi$ which a priori is a function of
$(x_1, x_2, H, \theta)$. Since this transformation preserves the symplectic 
structure, and the pull-back of $\omega$ equals
\[
 dx_1 \wedge dx_2 + f_1 dx_1 \wedge dH  + f_2 dx_2 \wedge dH + dH \wedge d(\theta+\chi), 
\]
 we must have $\chi=\chi(H)$.  
It follows that 
\[
\exp(-2\pi \chi \partial_\theta) \circ \exp(2\pi \Xi_F)=\mathrm{Id}.
\]
Finally we take $\Phi(H, W)=F-\chi(H) = \varphi(H, W)-\chi(H)$. This is the desired function.
\end{proof}

\section{Classical Birkhoff canonical forms}
In this section we will study a special semi-toric system that satisfies the assumptions in section 2. Let $X$ 
be the  cotangent space $T^*\mathbb R^2 \setminus \{0\}$ , with the standard symplectic form
\[
\omega = dx_1 \wedge d\xi_1 + dx_2 \wedge d\xi_2.
\]
As usual, we will let
\[
H_0 = \frac 12 (x_1^2+\xi_1^2+x_2^2+\xi_2^2)
\]
be the harmonic oscillator hamiltonian. We first consider a perturbation of $H_0$ of the form 
\[
H =H_0+\hbar^2 H_2 + O(\hbar^3)
\]
where $H_2 \in C^\infty(M)$. Let $\Xi_0$ be the hamiltonian vector field associated with $H_0$. Then $\Xi_0$ generates a circle action on $M$ which preserves $H_0$. 
By the Moser averaging argument (c.f. \cite{M}, \cite{Ur}),  
$H$ is symplectomorphic to 
\[
H_0+\hbar^2 H_2^{av}+ O(\hbar^3), 
\]  
where 
\[ 
H_2^{av} = \frac 1{2\pi} \int_0^{2\pi} (\exp t\Xi_0)^* H_2 dt
\] 
is the average of $H_2$ along the $\Xi_0$  orbits. In particular $\{H_0, H_2^{av}\}=0$. %
For the purposes of this paper,  we will be able to replace $H_2$ by its average.  Therefore, to simplify the notation, we will from  now on assume that $H_2 \in C^\infty(X)^{S^1}$. Let $I \subset (0, +\infty)$ be an open interval and let $M=H_0^{-1}(I)$. In what follows we will assume 
\begin{equation}\label{globalHyp}
\text{For any}\ \lambda\in I, \quad H_2^\lambda\ \text{is a perfect Morse function}
\end{equation}
where, as in last section, $H_2^\lambda: S_\lambda^2 \to \mathbb R$ is the hamiltonian obtained by reducing the 
function $H_2$ at $H_0=\lambda \in I$.

Under these assumptions, we get a completely integrable system map 
\[
(H_0, H_2): M \to \mathbb R^2. 
\]
Applying Theorem \ref{2torusAction}, one can find a smooth map $\Phi$ so that the map 
\[
\Psi = (H_0, \Phi(H_0, H_2))
\]
is the moment map of a Hamiltonian $S^1 \times S^1$ torus action whose image is a contractible region in $\mathbb R^2$. Moreover, from our construction we see that for any fixed $\lambda$, the function $\Phi(\lambda, \cdot)$ is invertible. We denote the inverse of this function
 by $\Phi_-(\lambda, \cdot)$. 
 
 Let 
 \begin{equation}\label{1dosc}
 H_0^i = \frac 12 (x_i^2+\xi_i^2),\quad i=1,2
 \end{equation}
 so that $H_0 = H_0^1+H_0^2$.
It is clear that there exists a function $\Phi_1$ so that the image of $(H_0, \Phi_1(H_0, H_0^1))$ 
is the same as the image of the moment map $\Psi$: 
 
\begin{center}
 \begin{tikzpicture}[scale=.7]
 
  \draw[thick, ->](-10, -2)--(-3, -2);
 \draw[thick, ->](-10, -2)--(-10, 4);
\draw[thick, smooth] plot coordinates
{(-9,-1)(-8,0)(-7,1)(-6,2)(-5,3)(-4,4)};
 \draw[thick, smooth] plot coordinates
{(-9,-1)(-9,-2)};
  \draw[thick, smooth] plot coordinates
{(-4,4)(-4,-2)};
\node at (-2.7, -1.9) {$H_0$};
\node at (-9.9, 4.2) {$H_0^1$};

\node at (-1, 0){$\to$};
 
 \draw[thick, ->](0, 0)--(8, 0);
 \draw[thick, ->](0, -2)--(0, 4);
\draw[thick, smooth] plot coordinates
{(1,.7)(2,1.4)(3,2.1)(4,2.8)(5,3.5)(6,4.2)};
\draw[thick, smooth] plot coordinates
{(1,-.3)(2,-.6)(3,-.9)(4,-1.2)(5,-1.5)(6,-1.8)};
 \draw[thick, smooth] plot coordinates
{(1,.7)(1,-.3)};
  \draw[thick, smooth] plot coordinates
{(6,4.2)(6,-1.8)};
\node at (3.5, .3) {Image of $\Psi$};
\node at (8.2, .1) {$H_0$};
\node at (0.1, 4.2) {$\Phi$};
  \end{tikzpicture} 
\end{center}  
 
\noindent According to the non-compact version of Delzant's theorem due to Yael Karshon and Eugene Lerman (\cite{KL}), there is a symplectomorphism 
\[f: M \to M\]
such that 
\[
\Psi \circ f  = (H_0, \Phi_1(H_0, H_0^1)).
\] 
It follows from this that
\[
\Phi(H_0, H_2 \circ f) = \Phi(H_0, H_2) \circ f = \Phi_1(H_0, H_0^1), 
\]
i.e. 
\[
H_2 \circ f = \Phi_- (H_0, \Phi_1(H_0, H_0^1)).
\]
In other words, we proved
\begin{theorem}\label{CBC}
Under the assumption (\ref{globalHyp}),
there exists a symplectomorphism $f: M \to M$ which conjugates the Hamiltonian $H=H_0+\hbar^2 H_2 + O(\hbar^3)$ to the Birkhoff canonical form
\[
H_0+\hbar^2 F_2(H_0^1, H_0^2) + O(\hbar^3),
\]
where $F_2$ is a 2-variable smooth function, and $H_0^i = \frac 12 (x_i^2+\xi_i^2)$. 
\end{theorem}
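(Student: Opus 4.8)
The plan is to observe that the construction of Section~2 applies verbatim to the $S^1$-invariant pair $(H_0, H_2)$, and then to compare the resulting toric system with the explicit toric system coming from $(H_0, H_0^1)$ by invoking a non-compact Delzant-type uniqueness theorem. First I would check that $(H_0, H_2)$ on $M = H_0^{-1}(I)$ meets the hypotheses of Theorem~\ref{2torusAction}: on $T^*\bbR^2 \setminus\{0\}$ the flow of $H_0$ is the $2\pi$-periodic rotation, which is free once $I \subset (0, +\infty)$, and whose reductions are the spheres $S^2_\lambda$, so assumption~(A) holds; the reduced function is $H_2^\lambda$, and assumption~(PM) is exactly the standing hypothesis (\ref{globalHyp}). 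Theorem~\ref{2torusAction} then yields a smooth $\Phi$ making $\Psi = (H_0, \Phi(H_0, H_2))$ the moment map of a Hamiltonian $S^1\times S^1$ action whose image is a contractible region in $\bbR^2$. Since $\Phi(\lambda,\cdot)$ is obtained by integrating a strictly positive density (Proposition~\ref{Varphi}), it is strictly increasing in its second variable, hence fiberwise invertible with smooth inverse $\Phi_-(\lambda,\cdot)$.

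Next I would produce an explicit reference model. The functions $H_0^1$ and $H_0^2 = H_0 - H_0^1$ generate the two commuting planar rotations of $T^*\bbR^2\setminus\{0\}$, so $(H_0, H_0^1)$ is itself the moment map of a torus action. I would then choose a smooth $\Phi_1$ so that the image of $(H_0, \Phi_1(H_0, H_0^1))$ coincides with the image of $\Psi$; concretely this amounts to matching the lower and upper boundary graphs of the two contractible regions over $I$, which is possible because both regions have the same combinatorial type. Once the two moment images are made literally equal, the non-compact version of Delzant's theorem due to Karshon and Lerman \cite{KL} furnishes a symplectomorphism $f: M \to M$ intertwining the two torus actions, i.e. $\Psi \circ f = (H_0, \Phi_1(H_0, H_0^1))$.

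It then remains to unwind this identity. Comparing first components gives $H_0 \circ f = H_0$, so $f$ fixes the unperturbed Hamiltonian; comparing second components gives $\Phi(H_0, H_2\circ f) = \Phi_1(H_0, H_0^1)$, and applying the fiberwise inverse $\Phi_-$ yields
\[
H_2 \circ f = \Phi_-\bigl(H_0, \Phi_1(H_0, H_0^1)\bigr).
\]
Writing $H_0 = H_0^1 + H_0^2$ and setting $F_2(s,t) = \Phi_-\bigl(s+t, \Phi_1(s+t, s)\bigr)$, this is exactly $H_2 \circ f = F_2(H_0^1, H_0^2)$, with $F_2$ smooth because both $\Phi_-$ and $\Phi_1$ are. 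Since $f$ preserves $H_0$, conjugation of the full Hamiltonian gives $H \circ f = H_0 + \hbar^2 F_2(H_0^1, H_0^2) + O(\hbar^3)$, as claimed. I expect the main obstacle to be the correct invocation of the Karshon--Lerman classification: one must verify that both $S^1 \times S^1$ actions are effective with \emph{proper} moment maps onto the common image, and that this image is of the type to which their non-compact Delzant theorem applies, so that the intertwining symplectomorphism $f$ genuinely exists. The matching of the two moment images via $\Phi_1$ is the step that has to be arranged with care.
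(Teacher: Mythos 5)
Your proposal is correct and follows essentially the same route as the paper's own proof: apply Theorem \ref{2torusAction} to the invariant pair $(H_0,H_2)$, choose $\Phi_1$ so that the moment image of $(H_0,\Phi_1(H_0,H_0^1))$ matches that of $\Psi$, invoke the Karshon--Lerman non-compact Delzant theorem to obtain the intertwining symplectomorphism $f$, and apply the fiberwise inverse $\Phi_-$ to read off $H_2\circ f = \Phi_-\bigl(H_0,\Phi_1(H_0,H_0^1)\bigr) = F_2(H_0^1,H_0^2)$. Your explicit formula $F_2(s,t)=\Phi_-\bigl(s+t,\Phi_1(s+t,s)\bigr)$ and the caveat about checking the hypotheses of the Karshon--Lerman classification are sensible additions, but the argument is the paper's.
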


\begin{remark}\label{F2Area}
From the proof we can see that for each fixed $\lambda$, the function $F_2(H_0^1, H_0^2)$ restricted to $H_0^1+H_0^2=\lambda$ is determined by the area function $A_\lambda(t) = \mathrm{Area}(H_2^\lambda<t)$, because the area function determines the function $\rho_\lambda$, which in turn determines the function $\varphi_\lambda$ in proposition \ref{Varphi}, and thus determines the function $\Phi(\lambda, \cdot)$ in theorem \ref{2torusAction}. 
\end{remark}

In what follows we will extend the theorem to higher order terms, i.e. we will consider perturbations of the form
\[
H =H_0+\hbar^2 H_2 + \hbar^3 H_3 + \hbar^4 H_4 +\cdots .
\]
Once again an application of the Moser averaging method allows us to assume without 
loss of generality  that $\{ H_0, H_j\}=0$ for all $j$.

\medskip
We begin by recalling that the reduced spaces $S^2_\lambda$ can be identified with the 
standard 2-sphere
of radius $\lambda/2$ in euclidean $\bbR^3$, with symplectic form $2/\lambda$ times the
area form induced by the euclidean structure.  (More intrinsically $\bbR^3$ is really the
Lie algebra su$(2)$, and $S^2_\lambda$ a coadjoint orbit of SU$(2)$.)
Under this identification the third ambient coordinate function, 
$X_3$,  is the reduction of the function
\[
\tilde X_3 = H_0^1-\lambda/2.
\]
$X_3$ generates a circle action on $S^2_\lambda$.

\begin{lemma}
Let $q$ be a perfect Morse function on $S^2_\lambda$ that is invariant under the above $S^1$
action (and therefore is a strictly monotone function of $X_3$).
Then for any $f \in C^\infty(S^2_\lambda)$, there exists a function 
$g \in C^\infty(S^2_\lambda)$ and an invariant 
function $f_0 \in C^{\infty}(S^2_\lambda)^{S^1}$ such that 
\[
\{q, g\} = f-f_0. 
\]
\end{lemma}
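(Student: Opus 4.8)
The plan is to use the fact that the Hamiltonian flow of $q$ and the $2\pi$-periodic circle action generated by $X_3$ sweep out the same orbits. Since $q$ is $S^1$-invariant and a strictly monotone function of $X_3$, write $q = Q(X_3)$ with $Q$ smooth and strictly monotone; then the Hamiltonian vector field of $q$ is $\Xi_q = Q'(X_3)\,\Xi_{X_3}$, so that
\[
\{q, g\} = Q'(X_3)\,\Xi_{X_3} g
\]
for every $g\in C^\infty(S^2_\lambda)$. The first thing to check is that $Q'$ vanishes nowhere, \emph{including at the two poles}. In the interior this is forced by the Morse hypothesis: if $Q'(c)=0$ for a regular value $c$ of $X_3$, the whole circle $\{X_3=c\}$ would be critical for $q$, contradicting non-degeneracy. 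At a pole, choosing Darboux coordinates $(u,v)$ in which $X_3 = \mathrm{const}\mp\tfrac12(u^2+v^2)+O(|(u,v)|^3)$, one finds that the Hessian of $q=Q(X_3)$ there equals $\mp Q'(\mathrm{pole})\cdot\mathrm{Id}$, so non-degeneracy of the critical point again gives $Q'\neq 0$.

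With this in hand, I would take $f_0$ to be the orbit average
\[
f_0 = \frac{1}{2\pi}\int_0^{2\pi}(\exp t\,\Xi_{X_3})^* f\, dt,
\]
which is smooth and $S^1$-invariant, and set $h = (f - f_0)/Q'(X_3)$. Since $X_3$, and hence $Q'(X_3)$, is constant along the orbits, $h$ is globally smooth (here $Q'\neq 0$ everywhere is essential) and its average over each orbit vanishes. The lemma then reduces to solving $\Xi_{X_3} g = h$ with $g$ smooth on all of $S^2_\lambda$.

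The naive solution, obtained by integrating $h$ in the angle variable, is smooth only away from the poles, and proving that it extends smoothly across them is the real obstacle. I would sidestep it entirely by writing the group-averaged solution
\[
g = \frac{1}{2\pi}\int_0^{2\pi} t\,(\exp t\,\Xi_{X_3})^* h\, dt,
\]
which is manifestly smooth everywhere. Using the $2\pi$-periodicity of $t\mapsto (\exp t\,\Xi_{X_3})^* h$ together with the vanishing of the orbit average of $h$, a short computation gives $(\exp s\,\Xi_{X_3})^* g = g + \int_0^s (\exp t\,\Xi_{X_3})^* h\, dt$, and differentiating at $s=0$ yields $\Xi_{X_3} g = h$. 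Hence $\{q, g\} = Q'(X_3)\,\Xi_{X_3}g = Q'(X_3)\,h = f - f_0$, as required. The only genuine use of the hypotheses is in guaranteeing $Q'\neq 0$ at the poles, which is precisely what makes $h$, and therefore the averaged solution $g$, smooth there.
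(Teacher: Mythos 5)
Your proof is correct and takes essentially the same route as the paper: both write $q=Q(X_3)$, exploit that $Q'(X_3)$ is nowhere vanishing, take $f_0$ to be (in effect) the orbit average of $f$, and reduce everything to solving the cohomological equation $\Xi_{X_3}g=h$ for the generator of the circle action. The only difference is that you fill in two details the paper asserts without proof --- the Hessian argument showing $Q'\neq 0$ at the poles, and the explicit $t$-weighted averaging formula producing a globally smooth solution $g$.
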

\begin{proof} 
We have that $q = h(X_3)$ where $h$ is a smooth one-variable function defined on 
the image of $X_3$ and with a nowhere vanishing derivative.  Explicitly, $h'(X_3)$
is a smooth, nowhere-vanishing function on the sphere.
Let $f_1$ be the average of the function $f/h'(X_3)$ with respect to the Hamiltonian
circle action, and let $g\in C^\infty(S^2_\lambda)$ be a function satisfying
\[
\{ X_3 , g\} = \frac{f}{h'(X_3)} - f_1.
\]
Then
\[
\{ q , g\} = \{h(X_3), g\} = h'(X_3)\{ X_3 , g\} = f - h'(X_3)f_1,
\]
i.e. $f_0 = h'(X_3)f_1$, which is obviously $S^1$ invariant.

\end{proof}

As a corollary, we get
\begin{proposition}
\label{AV}
Let $Q \in C^\infty(M)^{S^1}$ (i.e. invariant under the harmonic oscillator flow) satisfy 
the perfect Morse condition (\ref{globalHyp}) and be in Birkhoff canonical form, namely 
$Q=Q(H_0^1, H_0^2)$. Then for any $F \in C^\infty(M)^{S^1}$ there exists an invariant function 
$G \in C^\infty(M)$ and $F_0=F_0(H_0^1, H_0^2)$ such that 
\[
\{Q, G\} = F-F_0.
\]
\end{proposition}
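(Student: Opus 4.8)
The plan is to carry out the construction of the preceding lemma uniformly over all the reduced spheres $S^2_\lambda$ and then check that the fibrewise solutions patch together into smooth functions on $M$. The harmonic oscillator flow generated by $H_0$ is the free $S^1$ action whose reduced spaces are the $S^2_\lambda$, and since $Q,F\in C^\infty(M)^{S^1}$ they descend to functions $q=Q^\lambda$ and $f=F^\lambda$ on each $S^2_\lambda$. Because $Q$ is in Birkhoff canonical form, on the level set $H_0=\lambda$ we have $Q=Q(H_0^1,\lambda-H_0^1)$, so $q$ is a function of $H_0^1$ alone, hence of $X_3=H_0^1-\lambda/2$; the perfect Morse hypothesis \eqref{globalHyp} is then exactly the statement that $q$ is a strictly monotone function of $X_3$. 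Thus the lemma applies on every fibre and yields $g_\lambda$ together with an $X_3$-invariant $(f_0)_\lambda$ satisfying $\{q,g_\lambda\}=f-(f_0)_\lambda$ on $S^2_\lambda$.

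Next I would promote these data to $M$ and verify the bracket identity by reduction. The point is that symplectic reduction is a Poisson map on $S^1$-invariant functions: if $G,F_0\in C^\infty(M)^{S^1}$ restrict on each $S^2_\lambda$ to $g_\lambda$ and $(f_0)_\lambda$, then $\{Q,G\}$ is again $H_0$-invariant and reduces fibrewise to $\{q,g_\lambda\}=f-(f_0)_\lambda$, which is precisely the reduction of $F-F_0$. Two invariant functions with the same reduction on every $S^2_\lambda$ agree, so $\{Q,G\}=F-F_0$ on all of $M$. To see that $F_0$ has the required form $F_0(H_0^1,H_0^2)$, note that in the lemma $(f_0)_\lambda$ is built from $h'(X_3)$ and the $X_3$-average of $f/h'(X_3)$; lifted to $M$ these are invariant under both the $H_0^1$ and the $H_0$ flows, hence under the joint $T^2$ action generated by $H_0^1$ and $H_0^2$, whose invariants are exactly the smooth functions of $(H_0^1,H_0^2)$.

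The main obstacle is the smooth dependence on $\lambda$, i.e. producing genuine $G,F_0\in C^\infty(M)$ rather than a mere family over $\lambda$. I would bypass the gluing problem by performing the lemma's construction directly on $M$, using the circle action generated by $H_0^1$ (which commutes with $H_0$ since $\{H_0,H_0^1\}=0$ and which induces the $X_3$ action on each $S^2_\lambda$). Writing $Q=h(H_0^1,H_0)$ with $\partial h/\partial H_0^1$ nowhere vanishing, I set $F_1$ to be the $H_0^1$-average of $F/(\partial h/\partial H_0^1)$ and solve
\[
\{H_0^1,G\}=\frac{F}{\partial h/\partial H_0^1}-F_1
\]
by the normalized solution $G$ of zero $H_0^1$-average, which is automatically $H_0$-invariant since the right-hand side is and the two flows commute. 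Using $\{H_0,G\}=0$ this gives $\{Q,G\}=F-F_0$ with $F_0=(\partial h/\partial H_0^1)\,F_1=F_0(H_0^1,H_0^2)$. Both the orbital averaging and the solution of this cohomological equation are fixed, $\lambda$-independent integral operators over the $H_0^1$-orbits, so smoothness in $\lambda$ is automatic. The only delicate point — the one the lemma already absorbs — is the smoothness of $G$ across the fixed-point locus $\{H_0^1=0\}$ of the $H_0^1$ action; this is the standard smooth solvability of $\{H_0^1,G\}=u$ when $u$ has vanishing orbital average (equivalently, $u$ vanishes along the fixed-point set), and it is what makes the scheme work globally.
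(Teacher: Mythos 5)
Your argument is correct, and its engine is the same as the paper's: write $Q=h(H_0^1,H_0)$ with $\partial h/\partial H_0^1$ nowhere vanishing (which is what the perfect Morse condition \eqref{globalHyp} provides, including at the poles $H_0^1=0$ and $H_0^2=0$, since non-degeneracy of the critical points there forces $h'\neq 0$ at the endpoints too), divide $F$ by this derivative, average over a circle action, and solve the resulting cohomological equation, with $F_0$ equal to the derivative times the average. Where you differ is in \emph{where} the construction is performed. The paper proves its lemma on a single reduced sphere $S^2_\lambda$ using the $X_3$-action and then states Proposition \ref{AV} as an immediate corollary, leaving implicit exactly the issue you flag: the smooth dependence of $g_\lambda$ and $(f_0)_\lambda$ on $\lambda$ needed to assemble them into functions on $M$. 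Your third paragraph lifts the whole construction to $M$, replacing $X_3$ by $H_0^1$, whose flow covers the $X_3$-flows on all the reduced spheres simultaneously; the averaging and the normalized zero-mean solution operator for $\{H_0^1,G\}=u$ (given, up to sign, by the explicit formula $G=\frac{1}{2\pi}\int_0^{2\pi} t\,(u\circ\phi_t)\,dt$, which is manifestly smooth across the fixed-point locus $\{H_0^1=0\}$) are then single, $\lambda$-independent integral operators on $C^\infty(M)$, so smoothness in $\lambda$ and the $H_0$-invariance of $G$ (from commutativity of the two flows) come for free; and $F_0$ is a smooth function of $(H_0^1,H_0^2)$ because it is invariant under the joint $T^2$-action, by Schwarz's theorem on smooth invariants --- the same fact the paper uses tacitly when it calls $f_0=h'(X_3)f_1$ ``obviously $S^1$-invariant.'' So your route is the global version of the paper's fibrewise one, and what it buys is a genuinely complete proof of the proposition rather than an assertion that the lemma globalizes. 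One small slip to correct: your parenthetical claim that vanishing orbital average is \emph{equivalent} to vanishing of $u$ along the fixed-point set is false (vanishing at fixed points does not imply vanishing average); fortunately you only use the true direction, namely that zero orbital average guarantees a smooth global solution, which the integral formula above supplies.
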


As a consequence, we can now extend the Birkhoff canonical form in theorem \ref{CBC} to higher order terms:
\begin{theorem}\label{ClassicalBCF}
Under the assumption (\ref{globalHyp}), the perturbed harmonic oscillator Hamiltonian 
\[
H = H_0 + \hbar^2 H_2 + \hbar^3 H_3 + \cdots
\]
can be conjugated by a symplectomorphism on $M$ to the Birkhoff canonical form 
\[
H_0+\hbar^2 F_2(H_0^1, H_0^2)+ \hbar^3 F_3(H_0^1, H_0^2)+  \cdots,
\]
where $F_2$, $F_3$ etc are two variable smooth functions.
\end{theorem}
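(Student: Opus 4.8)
The plan is to prove Theorem \ref{ClassicalBCF} by induction on the power of $\hbar$, using Theorem \ref{CBC} to handle the $\hbar^2$ term and Proposition \ref{AV} as the engine of the inductive step. After Moser averaging I may assume that every $H_j$ Poisson-commutes with $H_0$, so that $H_j\in C^\infty(M)^{S^1}$. The symplectomorphism $f$ of Theorem \ref{CBC} satisfies $H_0\circ f=H_0$ (this is the first component of $\Psi\circ f=(H_0,\Phi_1(H_0,H_0^1))$), hence $f$ commutes with the harmonic-oscillator circle action and preserves $S^1$-invariance. Pulling $H$ back by this same $f$ therefore produces
\[
H_0+\hbar^2 F_2(H_0^1,H_0^2)+\sum_{j\ge 3}\hbar^j R_j,
\]
with $F_2$ as in Theorem \ref{CBC} and every $R_j=H_j\circ f\in C^\infty(M)^{S^1}$. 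This is the base case of the induction.

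For the inductive step, suppose $H$ has already been conjugated into
\[
H^{(k)}=H_0+\sum_{j=2}^{k}\hbar^j F_j(H_0^1,H_0^2)+\hbar^{k+1}R+O(\hbar^{k+2}),
\]
with $R\in C^\infty(M)^{S^1}$. The point is that $F_2$, restricted to any sphere $S^2_\lambda$, is a strictly monotone function of $H_0^1$ (this follows from the construction in Theorem \ref{CBC} together with the positivity of $\rho_\lambda$ in Proposition \ref{Varphi}), so $F_2$ is perfect Morse and in Birkhoff canonical form. I may therefore apply Proposition \ref{AV} with $Q=F_2$ and $F=R$ to obtain an invariant $G\in C^\infty(M)$ and a function $F_0=F_0(H_0^1,H_0^2)$ with
\[
\{F_2,G\}=R-F_0.
\]
I then conjugate $H^{(k)}$ by the Hamiltonian flow $\exp(\hbar^{k-1}\Xi_G)$, choosing the sign of $G$ appropriately. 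Since $G$ is $S^1$-invariant the leading bracket $\{\hbar^{k-1}G,H_0\}$ vanishes, so the first nonzero contribution of the Lie series appears one order higher, through $\{\hbar^{k-1}G,\hbar^2 F_2\}=\hbar^{k+1}(F_0-R)$; added to $\hbar^{k+1}R$ this replaces the order-$\hbar^{k+1}$ term by $\hbar^{k+1}F_0$, which is of the required form.

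It remains to check that the step is clean order by order. The brackets of $\hbar^{k-1}G$ with $H_0$, with $\hbar^j F_j$ for $j\ge 3$, and with $\hbar^{k+1}R$, together with all higher terms of the Lie series, all lie in order $\ge\hbar^{k+2}$ (using $2k\ge k+2$ for $k\ge 2$), so $H_0$ and the already-normalized $F_2,\dots,F_k$ are left untouched. Because $G$ is invariant, $\exp(\hbar^{k-1}\Xi_G)$ commutes with the $H_0$-action, and hence the new remainder in orders $\ge\hbar^{k+2}$ is again $S^1$-invariant, so the induction continues with $F_{k+1}:=F_0$. Composing the successive conjugations $\exp(\hbar\Xi_{G_3})\circ\exp(\hbar^2\Xi_{G_4})\circ\cdots$ into a single symplectomorphism, understood as a formal power series in $\hbar$ (each order of $\hbar$ being affected by only finitely many factors), completes the argument. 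The essential difficulty is entirely contained in Proposition \ref{AV}; the only real care required here is in bookkeeping the powers of $\hbar$ and in confirming that $F_2$ remains an admissible perfect Morse choice of $Q$ at every stage.
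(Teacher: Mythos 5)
Your proposal is correct and follows essentially the same route as the paper's own proof: conjugate via Theorem \ref{CBC} to normalize the $\hbar^2$ term, then iterate Proposition \ref{AV} with $Q=F_2$ at each order, conjugating by $\exp(\hbar^{k-1}\Xi_G)$ so that the bracket $\{\hbar^{k-1}G,\hbar^2F_2\}$ replaces the order-$\hbar^{k+1}$ term by a function of $(H_0^1,H_0^2)$. Your write-up is in fact more careful than the paper's (which leaves implicit the $S^1$-invariance of the higher terms after conjugating by $f$, the admissibility of $F_2$ as the $Q$ in Proposition \ref{AV}, and the power-of-$\hbar$ bookkeeping), but the underlying argument is identical.
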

\begin{proof}
We have seen that $H$ can be conjugated to 
\[
H_2+\hbar^2 F_2(H_0^1, H_0^2) + \hbar^3 \widetilde H_3 + O(\hbar^4). 
\]
We apply Theorem \ref{AV} with $Q=F_2$ and $F=\widetilde H_3$.  Then there exists an invariant function $G \in C^\infty(M)$ and $F_3 =F_3(H_0^1, H_0^2)$ so that 
\begin{equation}\label{coheq}
\{F_2, G\}=\widetilde H_3 - F_3.
\end{equation}
It follows that
\[
\exp(\hbar \Xi_G)^* (H_2 + \hbar^2 F_2+ \hbar^3 \widetilde H_3 + O(\hbar^4) )= H_2 + \hbar^2 F_2 + \hbar^3 F_3 + \hbar^4 \widetilde H_4 + O(\hbar^5). 
\]
Repeating this argument iteratively, we can convert each term into a function that 
depends only on $H_0^1$ and $H_0^2$ (at each stage one solves an equation of the
form (\ref{coheq}) involving $F_2$, and therefore Theorem \ref{AV} can be applied).
\end{proof}

\section{Quantum Birkhoff canonical forms}

In this section we will apply Theorem \ref{ClassicalBCF} to obtain a Birkhoff canonical form theorem for perturbations of the two dimensional  harmonic oscillator of the form
\[
\hat H = \hat H_0 + \hbar^2 \hat H_2+ \hbar^3 \hat H_3+ \cdots,
\]
where 
\[
\hat H_0 = -\frac { \hbar^2}2  (\frac{\partial^2}{\partial x_1^2} + \frac{\partial^2}{\partial x_2^2}) + \frac 12(x_1^2+x_2^2)
\]
and $\hat H_i (i \ge 2)$  are zeroth order semiclassical pseudo-differential operators. 
By the quantum version of Moser averaging we can conjugate such a perturbation 
by a unitary operator to one where the perturbation commutes with $\hat H_0$.  
Therefore without loss of generality we will assume that
$[\hat H_0 , \hat H_j] = 0$ for all $j\geq 2$. 
 
\begin{theorem}\label{QBC}
Under the assumption (\ref{globalHyp}) on the principal symbol of $\hat{H}_2$,
there exists a FIO, $\mathcal F$, that conjugates $\hat H$ into an operator of the form 
\[
\hat H_0 + \hbar^2 G_2 (\hat H_0^1, \hat H_0^2) + \hbar^3 G_3(\hat H_0^1,\hat H_0^2) + \cdots + \h^2\hat R,
\]
where $\hat H_0^i=-\frac{\hbar^2}2 \frac{\partial^2}{\partial x_i^2}+\frac{x_i^2}2$ for $i=1, 2$, the $G_j$'s are two-variable smooth functions for $j \ge 2$, and $\hat R$ is a pseudodifferential operator whose microsupport is disjoint from $M$.
\end{theorem}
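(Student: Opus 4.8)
The plan is to lift the classical normalization of Theorem \ref{ClassicalBCF} to the operator level in two stages: a single Fourier integral operator fixes the principal symbol, and a sequence of $\Psi$DO conjugations then removes the purely quantum corrections order by order, with the Poisson bracket replaced throughout by $\frac{i}{\h}[\cdot,\cdot]$.

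First I would quantize the symplectomorphism $f\colon M\to M$ produced by Theorem \ref{ClassicalBCF} as a microlocally unitary FIO $\mathcal F_0$, chosen equivariantly for the $\hat H_0$-flow so that $\mathcal F_0\inv\hat H_0\mathcal F_0=\hat H_0$ microlocally on $M$ (this is possible since $f^*H_0=H_0$). By Egorov's theorem the conjugate $\mathcal F_0\inv\hat H\mathcal F_0=\hat H_0+\h^2\hat A_2+\h^3\hat A_3+\cdots$ has, for each $j$, principal symbol $F_j(H_0^1,H_0^2)$; after a renewed quantum Moser averaging each $\hat A_j$ may be taken to commute with $\hat H_0$. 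Since $\hat A_2$ and the operator $F_2(\hat H_0^1,\hat H_0^2)$ share the same principal symbol, their difference is $O(\h)$ and may be absorbed into the higher-order terms. This identifies the normal-form term $G_2=F_2$ and leaves only quantum corrections to be removed.

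For the inductive step, suppose the Hamiltonian has been put in the form $\hat H_0+\h^2 G_2(\hat H_0^1,\hat H_0^2)+\cdots+\h^{k-1}G_{k-1}(\hat H_0^1,\hat H_0^2)+\h^k\hat B_k+O(\h^{k+1})$ with $\hat B_k$ commuting with $\hat H_0$. I would conjugate by $U_k=\exp\!\big(\tfrac{i}{\h}\,\h^{k-2}\hat G_k\big)$ with $\hat G_k$ again commuting with $\hat H_0$; since $[\hat H_0,\hat G_k]=0$, the leading change occurs at order $\h^k$ and equals $\h^k\,\tfrac{i}{\h}[G_2(\hat H_0^1,\hat H_0^2),\hat G_k]$. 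Hence it suffices to solve the quantum cohomological equation $\tfrac{i}{\h}[G_2(\hat H_0^1,\hat H_0^2),\hat G_k]=\hat B_k-G_k(\hat H_0^1,\hat H_0^2)+O(\h)$ and then bootstrap the $O(\h)$ remainder into the next stage; asymptotically summing the $U_k$ yields the asserted normal form, $\mathcal F$ being the total conjugating operator.

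The crux is therefore the quantum analogue of Proposition \ref{AV}: solvability of $\tfrac{i}{\h}[G_2(\hat H_0^1,\hat H_0^2),\hat G]=\hat F-\hat F_0$ within the commutant of $\hat H_0$. Decomposing over the finite-dimensional eigenspaces of $\hat H_0$, the operator $G_2(\hat H_0^1,\hat H_0^2)$ is diagonal in the joint eigenbasis of $\hat H_0^1,\hat H_0^2$, and the perfect Morse hypothesis (\ref{globalHyp}) — equivalently the strict monotonicity of the reduced function in $X_3=H_0^1-\lambda/2$ — forces its spectrum on each eigenspace to be simple with gaps bounded below by $c\h$. One then sets $G_k$ equal to the diagonal (averaged) part of $\hat B_k$ and inverts $\ad_{G_2(\hat H_0^1,\hat H_0^2)}$ on the off-diagonal part. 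The main obstacle is to show that this eigenspace-by-eigenspace inversion assembles into a genuine zeroth-order semiclassical $\Psi$DO: one must control the small denominators $q_i-q_j$ uniformly in $\h$ and exploit the decay of the off-diagonal matrix elements of $\hat B_k$, which is exactly where (\ref{globalHyp}) enters. Finally, since the classical normal form, the FIO $\mathcal F_0$, and the cohomological solutions are all defined only microlocally on $M=H_0^{-1}(I)$, the cutoffs needed to globalize contribute an error microsupported off $M$, which is precisely the remainder $\h^2\hat R$.
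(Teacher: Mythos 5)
Your first two paragraphs follow the paper's route: quantize the classical normalizing symplectomorphism as a microlocally unitary FIO (the paper's conditions (1)--(2) on $\hat F$), with the cutoffs producing the remainder $\h^2\hat R$ microsupported off $M$, and then remove higher-order terms by conjugating with exponentials $\exp\bigl(\tfrac{i}{\h}\,\h^{k-2}\hat G_k\bigr)$, solving a cohomological equation modulo $O(\h)$ at each stage and bootstrapping the error. Up to that point the proposal is correct and is essentially the argument the paper gives (tersely, deferring the iteration to \cite{GuW}).

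The gap is in your final paragraph, where you declare the ``crux'' to be the \emph{exact} quantum cohomological equation $\tfrac{i}{\h}[G_2(\hat H_0^1,\hat H_0^2),\hat G]=\hat F-\hat F_0$, to be solved by diagonalizing over the eigenspaces of $\hat H_0$, with spectral gaps of order $\h$ and small-denominator control --- and you concede you cannot show the resulting operator is a genuine semiclassical $\Psi$DO. None of this is needed, and the scheme you set up in your second paragraph already avoids it: to solve the equation modulo $O(\h)$ one simply takes $\hat G_k$ to be any quantization (made to commute with $\hat H_0$ by quantum averaging, which is harmless since the classical solution is already $S^1$-invariant) of the classical solution $G$ furnished by Proposition \ref{AV} applied with $Q=G_2$ and $F=\sigma(\hat B_k)$; the symbol calculus then gives
$\tfrac{i}{\h}\bigl[G_2(\hat H_0^1,\hat H_0^2),\hat G_k\bigr]=\{G_2,G\}+O(\h)=\hat B_k-G_k(\hat H_0^1,\hat H_0^2)+O(\h)$
automatically, which is exactly what the bootstrap requires. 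In other words, the perfect Morse hypothesis enters only through the \emph{classical} Proposition \ref{AV} (monotonicity of the reduced $G_2$ in $X_3$), not through quantum eigenvalue gaps or decay of off-diagonal matrix elements; the ``main obstacle'' you name --- assembling an eigenspace-by-eigenspace inversion of $\ad_{G_2}$ into a $\Psi$DO uniformly in $\h$ --- is a phantom created by insisting on an exact rather than approximate solution, and as written your proof is incomplete precisely at the step you identify as its crux.
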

\begin{proof}
Let $\hat F$ be a Fourier integral operator such that:
\begin{enumerate}
\item Its canonical relation intersected with $M \times M$ coincides with the 
canonical transformation constructed above, that conjugates $H_0+\h^2 H_2$ to
$H_0 + \h^2F_2(H_0^1, H_0^2)$ modulo higher-order terms,
\item Both $\hat F\hat F^*$ and $\hat F^*\hat F$ are microlocally equal to the identity on $M$.
\end{enumerate}
Then conjugating $\hat H_0 + \hbar^2 \hat H_2+\cdots$ by $\hat F$ gives us 
a pseudodifferential operator of the form
\[
\hat H_0 + \hbar^2  G_2 (\hat H_0^1, \hat H_0^2) + \h^3 \hat G_3 +h^2 \hat R_3
\]
where $\hat G_3$ and $\hat R_3$ are pseudodifferential operators of order zero and 
the microsupport of $R_3$ is disjoint from $M$.

Using Theorem \ref{ClassicalBCF} we can continue in this form indefinitely,
exactly as in \cite{GuW}.

\end{proof}

\section{Relation between the band invariants and the BCF}

We first show that the classical Birkhoff canonical form function $F_2$ in Theorem \ref{CBC} is determined by the semiclassical 
spectrum of the corresponding perturbed harmonic oscillator. 

\begin{proposition} For any given $\lambda>0$ the function $F_2(\lambda, \cdot)$ 
can be recovered from the semiclassical
spectrum of the perturbed harmonic oscillator $\hat H_0 + \h^2 \hat H_2+O(\hbar^3)$.
\end{proposition}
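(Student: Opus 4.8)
The plan is to reconstruct $F_2(\lambda,\cdot)$ from the leading-order asymptotics of the spectral measure $\mu_{n,\hbar}$ defined in (\ref{sm}), using the bijection established in Remark \ref{F2Area} between $F_2(\lambda,\cdot)$ and the area function $A_\lambda(t)=\mathrm{Area}(H_2^\lambda<t)$. The key observation is that the spectral measure should converge, as $\hbar\to 0$ with $\hbar(n+1)=\lambda$ fixed, to the push-forward measure $(H_2^\lambda)_*|\omega_\lambda|$ on the reduced space $S^2_\lambda$. Since by the Lemma this push-forward measure is $\rho_\lambda(w)\,dw$, and $\rho_\lambda=A_\lambda'$, the limiting measure determines $A_\lambda$ (up to an additive constant fixed by $A_\lambda(a_\lambda)=0$), hence determines $F_2(\lambda,\cdot)$.

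First I would make precise the semiclassical limit. Since $[\hat H_0,\hat H_2]=0$, the operator $\hat H_2$ preserves each eigenspace of $\hat H_0$ with eigenvalue $\hbar(n+1)$, which has dimension $n+1$. The rescaled eigenvalues $(\lambda_i(n,\hbar)-\hbar(n+1))/\hbar^2$ appearing in (\ref{sm}) are, to leading order, the eigenvalues of $\hat H_2$ restricted to this eigenspace. The reduced space $S^2_\lambda$ is the phase space obtained by symplectic reduction of $H_0$ at $\lambda=\hbar(n+1)$, and the eigenspace of $\hat H_0$ is its geometric quantization. Thus I would invoke the standard principle (the spectral measure of a Toeplitz or reduced pseudodifferential operator converges to the push-forward of Liouville measure under its symbol) to conclude that for $\rho\in C^\infty_0(\bbR)$,
\[
\lim_{\hbar\to 0,\ \hbar(n+1)=\lambda} \hbar^2\,\mu_{n,\hbar}(\rho) = \frac{1}{2\pi}\int_{S^2_\lambda} \rho(H_2^\lambda)\,|\omega_\lambda|= \frac{1}{2\pi}\int_{a_\lambda}^{b_\lambda}\rho(w)\,\rho_\lambda(w)\,dw,
\]
where the normalizing factor accounts for $\dim\approx \lambda/\hbar$ states filling the sphere.

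Having identified the limiting measure as $\rho_\lambda(w)\,dw$, I would then run the reconstruction backwards through the chain in Remark \ref{F2Area}. The measure gives $\rho_\lambda$ directly; integrating gives $A_\lambda$; the Proposition \ref{Varphi} construction ($\varphi_\lambda'=\rho_\lambda$) gives $\varphi_\lambda$ and hence $\Phi(\lambda,\cdot)$; and composing with the normalization from the one-dimensional model $(H_0,\Phi_1(H_0,H_0^1))$ recovers $F_2(\lambda,\cdot)=\Phi_-(\lambda,\Phi_1(\lambda,\cdot))$ along the slice $H_0^1+H_0^2=\lambda$. Each of these steps is a smooth invertible transformation already built in Section 3, so no new analytic input is needed once the limit is known.

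The main obstacle I expect is justifying the semiclassical limit rigorously, that is, controlling the error between the rescaled eigenvalues of $\hat H$ and the eigenvalues of the reduced symbol $H_2^\lambda$ uniformly enough to pass to the limit in the spectral measure. This requires that the $O(\hbar^3)$ terms in $\hat H$ contribute negligibly after rescaling by $\hbar^2$, and that the finite-dimensional spectral problem on the $(n+1)$-dimensional eigenspace genuinely approximates the classical push-forward; the cleanest route is to appeal to the quantum Birkhoff normal form of Theorem \ref{QBC}, which replaces $\hat H_2$ by $G_2(\hat H_0^1,\hat H_0^2)$ with controlled remainder, so that the eigenvalues become explicitly $G_2$ evaluated at the lattice points of the joint spectrum of $(\hat H_0^1,\hat H_0^2)$, and a Riemann-sum argument yields the limiting measure directly. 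I would emphasize that since $G_2$ and $F_2$ agree to leading order, and the symbol calculus is standard, the genuinely delicate point is the uniformity of the remainder estimates across the shrinking spectral window $[\hbar(n+1)-c\hbar^2,\hbar(n+1)+c\hbar^2]$.
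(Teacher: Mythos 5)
Your proposal is correct and takes essentially the same route as the paper: fix $\lambda$ and the sequence $\hbar=\lambda/j$, observe that the eigenvalue shifts in each cluster are spectrally determined, invoke the Szeg\"o-type limit theorem (your ``standard principle'' for operators commuting with $\hat H_0$, which is exactly what \S 3 of \cite{GUW2} provides) to identify their limiting distribution with the push-forward measure $(H_2^\lambda)_*|\omega_\lambda|$, and then recover $F_2(\lambda,\cdot)$ through the area function as in Remark \ref{F2Area}. The only slip is the normalization: since each cluster contains $n+1\approx\lambda/\hbar$ eigenvalues, the spectral measure $\mu_{n,\hbar}(\rho)$ should be rescaled by $\hbar$ (equivalently by $1/(n+1)$), not by $\hbar^2$, to obtain the nonzero weak limit; this does not affect the reconstruction argument.
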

\begin{proof}
Let us consider the sequence of values of $\hbar$ given
by 
\[
\lambda = j\hbar, \quad j=1,\,2,\ldots .
\]
Along this sequence, the spectrum of $\hat H_0 + \h^2 \hat H_2 + \cdots$ in a neighborhood 
of $\lambda$ of size $O(\hbar)$ consists, for $\h$ small enough, of eigenvalues  of the form
$\lambda+\hbar^2\mu_{j,\ell, \hbar}$, and therefore 
the ``eigenvalue shifts" $\mu_{j,\ell, \hbar}$ are spectrally determined (for the above 
values of $\hbar$).  
By the Szeg\"o limit theorem for the perturbations considered here (c.f. \S 3 in \cite{GUW2}),
the asymptotic distribution of the $\mu_{j,\ell, \hbar}$ is given in the weak sense 
by the push-forward measure $(H_2^\lambda)_*(\omega_\lambda)$, where $H_2^\lambda$
is the reduction of $H_2$ to $S_2^\lambda$, and $\omega_\lambda$ the symplectic form of
$S^2_\lambda$.  
This allows us to determine the function of $t$
\[
\mathrm{Area}(H_2^\lambda< t),
\]
which determines the function $F_2$ as explained in remark \ref{F2Area}.
\end{proof}

Next we will prove a  version of this for the quantum canonical form. 
We will consider the operator 
\[
\hat H = \hat H_0 +\hbar^2 \hat H_2 + \hbar^3 \hat H_3 + \cdots.
\]
According to theorem \ref{QBC}, the  functions $G_j$ appearing in the quantum Birkhoff 
normal form  determine the spectrum of $\hat H$ modulo $O(\hbar^\infty)$. 
Conversely, we can prove:
\begin{theorem}
The spectrum of $\hat H$ determines the quantum Birkhoff normal form 
functions $G_2, G_3$ etc. 
\end{theorem}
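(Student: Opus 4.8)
The plan is to exploit the fact that the spectrum of the quantum Birkhoff normal form is completely explicit, and then to peel off the functions $G_j$ one power of $\h$ at a time. Since $\hat H_0^1$ and $\hat H_0^2$ commute and have joint spectrum $\{(\h(n_1+\tfrac12),\,\h(n_2+\tfrac12)) : n_1, n_2 \geq 0\}$, the normal-form operator
\[
\hat H_0 + \h^2 G_2(\hat H_0^1, \hat H_0^2) + \h^3 G_3(\hat H_0^1, \hat H_0^2) + \cdots
\]
has eigenvalues $\h j + \h^2 \sum_{k\geq 2} \h^{k-2} G_k(s_1, s_2)$, where $s_1 = \h(n_1 + \tfrac12)$, $s_2 = \h(n_2+\tfrac12)$, and $j = n_1 + n_2 + 1$, so that $s_1 + s_2 = \h j$. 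By Theorem \ref{QBC} the operator $\calF$ is microlocally unitary on $M$, so in the band of size $O(\h^2)$ around $\lambda = \h j$ the actual spectrum of $\hat H$ agrees with these values modulo $O(\h^\infty)$, and the remainder $\h^2\hat R$ contributes nothing since its microsupport misses $M$. Writing the band eigenvalues as $\lambda + \h^2\mu_{j,\ell,\h}$ exactly as in the preceding proposition, we therefore obtain the asymptotic expansion
\[
\mu_{j,\ell,\h} = G_2(s_1, s_2) + \h\, G_3(s_1, s_2) + \h^2 G_4(s_1, s_2) + \cdots,
\]
with $s_1 = \h(\ell + \tfrac12)$ and $s_2 = \lambda - s_1$, and these shifts are spectrally determined.

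First I would fix $\lambda > 0$ and restrict to the sequence $\h = \lambda/j$, $j \to \infty$. For a target value $u \in (0, \lambda)$ choose $\ell = \ell(j)$ so that $s_1 = \h(\ell+\tfrac12) \to u$; then the leading term gives $\lim_{j\to\infty} \mu_{j,\ell,\h} = G_2(u, \lambda - u)$, recovering $G_2$ on the whole line $H_0^1 + H_0^2 = \lambda$, hence (varying $\lambda$) on the positive quadrant. This agrees with the preceding proposition, which recovers $G_2 = F_2$ via the Szeg\"o limit theorem. To pass from the raw band spectrum to the labeled data $(s_1, \mu_{j,\ell,\h})$ I would use that $G_2(\cdot, \lambda - \cdot)$ is strictly monotone along the line — this is built into the construction of Theorem \ref{CBC}, where $F_2$ is a monotone function of $H_0^1$ for fixed $H_0$ — so that sorting the $j$ band eigenvalues identifies the eigenvalue of rank $\ell$ with the lattice index $n_1 = \ell$, i.e. with $s_1 = \h(\ell + \tfrac12)$.

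The inductive step recovers $G_k$ once $G_2, \ldots, G_{k-1}$ are known. Since each $G_m$ is a known smooth function, I can evaluate it at the exact lattice point $(s_1, s_2)$ and form
\[
\frac{1}{\h^{k-2}}\left(\mu_{j,\ell,\h} - \sum_{m=2}^{k-1} \h^{m-2}\, G_m(s_1, s_2)\right) = G_k(s_1, s_2) + O(\h).
\]
Letting $\h \to 0$ along $\h = \lambda/j$ with $s_1 \to u$, and using the continuity of $G_k$, the left-hand side converges to $G_k(u, \lambda - u)$. Because we subtract the lower-order terms evaluated at the exact moving point $(s_1, s_2)$ rather than at the limit $(u, \lambda - u)$, no Taylor corrections from $G_2, \ldots, G_{k-1}$ leak into the $\h^{k-2}$ coefficient, so the limit is clean. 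Varying $u$ and $\lambda$ then determines $G_k$ on the whole quadrant.

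The hard part will be making the expansion for $\mu_{j,\ell,\h}$ genuinely uniform across the band, so that the term-by-term extraction is justified: one must control the $O(\h^\infty)$ error coming from the microlocal unitarity of $\calF$ and the $O(\h)$ remainder in each subtraction uniformly in $\ell$ (equivalently, uniformly as $s_1$ ranges over a compact subinterval of $(0,\lambda)$). This amounts to checking that the eigenvalue expansion, which is exact for the model operator $\hat H_0 + \sum_k \h^k G_k(\hat H_0^1, \hat H_0^2)$, is inherited by $\hat H$ with errors uniform on the relevant lattice points — a standard but nontrivial consequence of the functional calculus for the commuting pair $(\hat H_0^1, \hat H_0^2)$ together with the microlocal equivalence of Theorem \ref{QBC}. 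The endpoints $u \to 0, \lambda$, i.e. the fixed points of the $S^1$ action where the labeling degenerates, would require a separate, mild boundary argument.
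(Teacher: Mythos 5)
Your proposal is correct in substance but takes a genuinely different route from the paper's. The paper never labels individual eigenvalues: it forms the joint spectral measure $\mu(f)=\tr f(\hat H_0,\hat H)=\sum f(\hbar k_1+\hbar k_2,\,G_\hbar(\hbar k_1,\hbar k_2))$, applies the Euler--Maclaurin (Riemann sums over polytopes) asymptotics of Guillemin--Sternberg to replace the lattice sum by $\int f(s+t,G_\hbar(s,t))\,ds\,dt$ modulo $O(\hbar^\infty)$, and changes variables to conclude that the Jacobian $\partial_s G_\hbar-\partial_t G_\hbar$ is spectrally determined; this recovers $G_\hbar$ only up to an additive term $g(s+t)$, which is then pinned down by a separate argument (test functions concentrating on the segments $s+t=\lambda$ determine the range of $G_\hbar$ on each such segment). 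You instead work band by band and recover the $G_k$ pointwise, labeling the eigenvalues within a cluster by sorting. That labeling step is legitimate, and is the real content you add: by the construction in Theorem \ref{CBC}, $G_2=F_2$ restricted to $s+t=\lambda$ equals $\Phi_-(\lambda,\pm H_0^1+c(\lambda))$ with $\Phi_-(\lambda,\cdot)$ strictly increasing and with derivative bounded away from zero on the closed interval (since $\rho_\lambda>0$ there), so adjacent model shifts differ by roughly $\hbar$ times a nonzero quantity and the finite-$\hbar$ sorting is exact for small $\hbar$ uniformly over the \emph{whole} band --- which also makes your worried ``separate boundary argument'' unnecessary, as continuity of $G_k$ handles the closed region. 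What your approach buys: no Euler--Maclaurin input, direct pointwise recovery with no additive ambiguity $g(s+t)$ to remove afterwards, and a transparent induction on powers of $\hbar$. What it costs: you must justify the labeling (done via monotonicity) and the uniformity of the $O(\hbar^\infty)$ expansion across the band; but the latter is exactly the paper's own one-line opening assertion (eigenvalues in $I$ have the stated lattice form, by Theorem \ref{QBC} and the microsupport condition on $\hat R$), so you are no worse off than the paper on that point. One caveat common to both arguments: sorting cannot distinguish increasing from decreasing, so the spectrum determines the $G_k$ only up to the simultaneous swap $(s,t)\mapsto(t,s)$; this ambiguity is intrinsic (the swapped normal form is unitarily equivalent, hence isospectral) and is present in the paper's proof as well, where the change of variables really determines only $\left|\partial_s G_\hbar-\partial_t G_\hbar\right|$.
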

\begin{proof}
Since the micorsupport of $\hat R$ is away from $M$, the eigenvalues of $\hat H$ in
the interval $I$ (defining the manifold $M$ where the ``perfect Morse condition" 
(\ref{globalHyp}) is assumed to hold) are of the form
\[
\hbar k_1+\hbar k_2+ \hbar^2 G_2(\hbar k_1, \hbar k_2) +\hbar^3 G_3(\hbar k_1, \hbar k_2) + \cdots + O(\hbar^\infty),
\]
where $k_1, k_2 \in  \mathbb Z_+$. 
Consider the joint spectral measure of $\hat H_0$ and $\hat H$, defined by 
\[
\mu(f) = \mathrm{Tr}(f(\hat H_0, \hat H))=\sum f(\hbar k_1+\hbar k_2,G_\h(\hbar k_1, \hbar k_2)), 
\]
where $f \in C_0^\infty(\mathbb R^2)$ is any compactly-supported smooth function, and 
\[
G_\h(s, t)=s+t+\hbar^2  G_2(s, t)+ \hbar^3 G_3(s, t)+ \cdots
\] 
is a Borel summation of the quantum Birkhoff normal form. 
According to the Euler-Maclauin formula proven by S. Sternberg the the first author in (\cite{GS}), 
as $\hbar \to 0$ we have the following asymptotics modulo $O(\hbar^\infty)$:
\[
\mu(f) \sim \int_{\mathbb R^2_+} f(s+t, G_\h(s, t)) dsdt +O(\hbar^\infty).
\]
So the integral 
\[ 
\int_{\mathbb R^2_+} f(s+t,G_\h(s, t)) dsdt
\]  
is spectrally determined for any compactly supported function $f$. 
Making the change of variables $u=s+t, v=G_\h(s, t)$, the previous integral becomes 
\[ 
\int_{\mathbb R^2_+} f(u, v) (\frac{\partial G_\h}{\partial s} - \frac{\partial G_\h}{\partial t})^{-1} dudv .
\]  
Therefore, the Jacobian 
\[
\frac{\partial G_\h}{\partial s} - \frac{\partial G_\h}{\partial t}
\]
is spectrally determined. In other words, one can recover the function $G_\h$ 
(modulo $O(\h^\infty)$)
up to an additive factor of the form $g(s+t)$. 

To determine this additive factor,  we can fix a $\lambda$ and pick test functions $f$ that are
supported in shrinking neighborhoods of the line segment $s+t=\lambda$ 
inside the first quadrant. Using this 
one can determine the range of the function $G_\h$ restricted to the line segment $s+t=\lambda$. 
Therefore the unknown function $g$ is also spectrally determined. 
\end{proof}


\section{The local case}

The results we described in previous sections extend to functions whose reductions are not 
perfect Morse on each reduced space $S_\lambda^2$. In this section we will replace the perfect 
Morse assumption (\ref{globalHyp}) by: 
\[\aligned
&\mathrm{For\ all\ } \lambda \in I, \text{there exist }a_\lambda \text{ and }b_\lambda, \text{ depending smoothly on }\lambda, \text{ such that } 
\\
&\mathrm{(1)}  (H_2^\lambda)^{-1} (a_\lambda) \text{ is connected.} \\
& \mathrm{(2)}  H_2^\lambda \text{ has no critical point in the open set } U_\lambda :=(H_2^\lambda)^{-1}((a_\lambda, b_\lambda)).
\endaligned\]
Note that the two conditions imply that $(H_2^\lambda)^{-1}(t)$ is connected for all $a_\lambda < t<b_\lambda$.

 \begin{center}
 \begin{tikzpicture}[scale=.7]

 \draw[thick, smooth] plot coordinates
{(-1.7,-1)(-1,-1.2)(0,-1)(1,-.9)(1.9, -.7)};
\draw[thick, dash pattern= on 4pt off 3pt]
plot coordinates
{(-1.7,-1)(-1,-.9)(0,-.8)(1,-1.1)(1.9, -.7)};

\node at (0, 0){\small $a_\lambda< H_2^\lambda<b_\lambda $};

 \draw[thick, smooth] plot coordinates
{(-1.7,1)(-1,1.3)(0,1.2)(1,1.2)(1.8, .9)};
\draw[thick, dash pattern= on 4pt off 3pt]
plot coordinates
{(-1.7,1)(-1,0.9)(0,0.8)(1,0.7)(1.8, .9)};

\draw [thick]  (0, 0) circle (57pt);
 
\end{tikzpicture} 
\end{center}  

We will let 
\[
M = \bigcup_{\lambda \in I} \pi_\lambda^{-1}\left( (H_2^\lambda)^{-1} ((a_\lambda, b_\lambda))\right),
\]
where $\pi_\lambda: S_\lambda^3 \to S_\lambda^2$ is the projection. 
Then $M \subset \mathbb R^4$ is an open symplectic submanifold, and the map 
\[
\Phi = (H_0,  H_2): M \to \mathbb R^2
\]
is a semitoric completely integral system without singular values. By the same argument as in section 2, one can convert this  semitoric system to a toric system on 
\[
\widetilde M = \cup_{\lambda \in I}\pi_\lambda^{-1}(X_3^{-1}(c_\lambda, d_\lambda)),
\]
where $c_\lambda$, $d_\lambda$ are determined by 
\[
\mathrm{Area}(H^\lambda_2<a_\lambda) = \mathrm{Area}(X_3<c_\lambda), \quad \mathrm{Area}(H^\lambda_2<b_\lambda) = \mathrm{Area}(X_3<d_\lambda).
\]
The same arguments as in section 3 imply 
\begin{theorem}
There exists a symplectomorphism $F: M \to \widetilde M$  that conjugates the hamiltonian 
$H=H_0+\hbar^2 H_2 +O(\hbar^3)$ on $M$ to the Birkhoff canonical form 
$H_0+\hbar^2 F_2(H_0^1, H_0^2)+O(\hbar^3)$ on $\widetilde M$. 
\end{theorem}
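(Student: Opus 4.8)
The plan is to repeat the arguments of Sections 2 and 3 almost verbatim, replacing each full reduced sphere $S^2_\lambda$ by the open annulus $U_\lambda = (H_2^\lambda)^{-1}((a_\lambda, b_\lambda))$ and replacing the perfect-Morse hypothesis by the assumption that $H_2^\lambda$ has no critical points on $U_\lambda$ and has connected level sets there. The only genuinely new feature is that the target of the symplectomorphism is the model region $\widetilde M$ rather than $M$ itself, and the area-matching conditions defining $c_\lambda$ and $d_\lambda$ are precisely what makes the two associated toric moment images coincide.

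First I would establish the local analogs of the push-forward lemma of Section 2 and of Proposition \ref{Varphi}. On $U_\lambda$ the reduced function $H_2^\lambda$ is a submersion onto $(a_\lambda, b_\lambda)$ with connected circle fibers, so $(H_2^\lambda)_*|\omega_\lambda|$ has a smooth, strictly positive density $\rho_\lambda(w)$ on the open interval; this is in fact simpler than the compact case, since there are no critical points to analyze and $\rho_\lambda(w)$ is just the (smooth, positive) period of the Hamiltonian flow of $H_2^\lambda$ on the level $\{H_2^\lambda = w\}$, depending smoothly on $(\lambda, w)$. The action-period relation then gives, exactly as in Proposition \ref{Varphi}, a strictly increasing $\varphi_\lambda$ with $\varphi'_\lambda = \rho_\lambda$ for which $\varphi_\lambda \circ H_2^\lambda$ generates a free circle action on $U_\lambda$; note that $\varphi_\lambda(w)$ equals the area function $\mathrm{Area}(H_2^\lambda < w)$ up to an additive constant.

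Next I would invoke the construction in the proof of Theorem \ref{2torusAction}, which is purely local and never uses compactness of the fibers, to produce a smooth $\Phi$ such that $(H_0, \Phi(H_0, H_2)): M \to \mathbb{R}^2$ is the moment map of a Hamiltonian $S^1 \times S^1$ action whose image has no singular values. On the model side $X_3$ (equivalently $H_0^1$) already generates a free circle action on $\widetilde M$, so one chooses $\Phi_1$ making $(H_0, \Phi_1(H_0, H_0^1))$ a toric moment map there. Because the action variable equals the symplectic area of its sublevel set (as in Remark \ref{F2Area}), the two defining equations for $c_\lambda, d_\lambda$ say exactly that the range of the $H_2$-action variable over $U_\lambda$ matches the range of the $X_3$-action variable over $X_3^{-1}(c_\lambda, d_\lambda)$ for every $\lambda \in I$; hence, after the normalization $\varphi'_\lambda = \rho_\lambda$, the two toric images coincide.

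Finally, since both $(H_0, \Phi(H_0, H_2))$ on $M$ and $(H_0, \Phi_1(H_0, H_0^1))$ on $\widetilde M$ are moment maps of effective torus actions with the same image and no singular values, the noncompact Delzant theorem of Karshon--Lerman \cite{KL} furnishes a symplectomorphism $F: M \to \widetilde M$ intertwining them and preserving $H_0$. Unwinding this as in Theorem \ref{CBC}, via the fiberwise inverse $\Phi_-(\lambda, \cdot)$ of $\Phi(\lambda, \cdot)$, yields $H_2 \circ F = \Phi_-(H_0, \Phi_1(H_0, H_0^1))$, a function of $(H_0, H_0^1)$ and hence of $(H_0^1, H_0^2)$, which is the required $F_2$; conjugating $H$ by $F$ then gives the stated canonical form modulo $O(\hbar^3)$. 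The step I expect to be the main obstacle is precisely this matching: checking that $c_\lambda, d_\lambda$ are smooth in $\lambda$ (via the implicit function theorem applied to the strictly monotone area functions), that $\widetilde M$ is an open symplectic submanifold carrying a free standard torus action, and above all that the two moment images agree exactly so that \cite{KL} applies; the remainder is a local repetition of the earlier sections.
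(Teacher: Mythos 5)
Your proposal is correct and follows essentially the same route as the paper, whose own ``proof'' of this theorem is just the assertion that the arguments of Sections 2 and 3 carry over: construct the action variable $\varphi_\lambda$ from the push-forward density on each annulus $U_\lambda$, build the toric moment map $(H_0,\Phi(H_0,H_2))$ as in Theorem \ref{2torusAction}, match its image with that of $(H_0,\Phi_1(H_0,H_0^1))$ on $\widetilde M$ via the area conditions defining $c_\lambda, d_\lambda$, and apply Karshon--Lerman as in Theorem \ref{CBC}. Your write-up in fact supplies details the paper leaves implicit (compactness and connectedness of the torus fibers, smoothness of $c_\lambda, d_\lambda$, the exact role of the area-matching in making the two moment images coincide), so it is a faithful and somewhat more complete rendering of the intended argument.
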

Similarly one can extend the remaining theorems in sections 3, 4, and 5 to this setting.

\section{Other examples}
In this section we briefly describe other settings to which our results or techniques can be extended 
easily.
\subsection{Perturbations of $\Delta_{S^2}$ by vector fields}  Consider an operator of the form
\[
\Delta + Q_\h,
\]
where $\Delta$ is the Laplacian on the round sphere of radius one $S^2$, and 
\begin{equation}\label{divfree}
Q_\h =  \frac{\h}{i}D,
\end{equation}
where $D$ is a divergence-free vector field on $S^2$
(more generally we can consider $Q_\h$ to be a zeroth order self-adjoint
semiclassical pseudo differential operator).   In this case the classical semi-toric system 
will consist of the symplectic
manifold
\begin{equation}\label{cots2}
M = T^*S^2\setminus\{0\},
\end{equation}
with its standard symplectic form, $\tilde{H}_0(x,\xi) = |\xi|$, and $H_1:M\to\bbR$ the averaged symbol of $Q_\h$.
The Hamilton flow of $\tilde{H}_0$ is $2\pi-$periodic, as it is geodesic flow re-parametrized by
arc length.

\newcommand{\n}{\vec{n}}
The reduced spaces are spheres, $S^2_\lambda$.  This can be seen as follows:  
An oriented geodesic $\gamma\subset S^2$ (an oriented great circle) is the intersection of $S^2$ with an 
oriented plane, which is characterized by its positive unit normal vector, $\n_\gamma$.  The bijective
correspondence $\gamma\to\n_\gamma$ shows that the reduced spaces are spheres.
The fiber dilations in $T^*M$ induce diffeomorphisms $S^2_1\to S^2_\lambda$ that simply 
rescale the symplectic form by a factor of $\lambda$.

Now we let $H_0 = \norm{\xi}^2 = \tilde{H}_0^2$ and consider a perturbation $H_0+\h H_1$.
By Moser averaging, the Hamiltonian system $H_0+\h H_1$ can be conjugated within $M$ to a 
system of the form $H_0+ \h W +\cdots$ where $W = H_1^{av}$.   At this point we present some details
on the averaging method:  If ``ave" denotes averaging of functions on $M$ with respect to the circle action
generated by $\tilde{H}_0$, the averaging method starts by finding a function $X:M\to\bbR$ that solves the
equation
\begin{equation}\label{ave1}
\{X , H_0\} = H_1^{ave} - H_1.
\end{equation}
But this equation is equivalent to $\{X , \tilde{H}_0\} = (H_1^{ave}-H_1)/2\tilde{H}_0$,  which is solvable because
$(H_1/\tilde{H}_0)^{ave} = H_1^{ave}/\tilde{H}_0$.  Then one has
\[
\exp(\h\Xi_X)^*(H_0 + \h H_1) = H_0 + \h H_1^{ave} + \cdots
\]
where the dots represent terms of order $O(\h^3)$ and higher.  Clearly the method can be continued indefinitely.

\medskip
We now concentrate on the case (\ref{divfree}).  We will show that there are many examples
of perturbations of this sort where the perfect Morse condition holds.
By homogeneity of the symbol of $D$,
under the aforementioned dilations, 
$S^2_1\to S^2_\lambda$,
the various reduced functions $W^\lambda: S^2_\lambda\to\bbR$ are scalar multiples of each other.
We now investigate the condition that these functions are perfect Morse.  For this we need to compute
the averaging procedure for symbols of divergence-free vector fields.

Let $\varpi$ be the area form of $S^2$, which we now think of as a symplectic form.  Since $S^2$
is simply connected, that $D$ is divergence free (i.e. $\calL_D\varpi = 0$) is equivalent to the existence
of a function $f:S^2\to\bbR$ such that $D$ is the Hamilton vector field of $f$ with respect to $\varpi$.
We will consider the composition
\begin{equation}\label{funRadon}
\begin{array}{rcc}
\calR:  C^\infty(S^2) & \to & C^\infty(S_1^2)\\
f & \mapsto & W^1
\end{array}
\end{equation}
mapping a ``hamiltonian" $f$ to the reduction at $\lambda = 1$ of the symbol of its Hamilton 
vector field.

By Schur's lemma $\calR$ maps each space of spherical harmonics into itself, and on each such
space it is a multiple of the identity.  A calculation of 
$\calR [(x+iy)^k]$ at one point yields the following:

\begin{lemma} (See also \cite{Ku})
The kernel of $\calR$ consists exactly of the even functions.  In addition, if $\calH_{2m+1}\subset C^\infty(S^2)$
denotes the space of spherical harmonics of degree $2m+1$ then $\calR$ maps this space into itself and the
restriction equals the identity times
\[
\calR|_{\calH_{2m+1}} = \frac{2m+1}{2\pi}\cdot \frac{(-1)^m}{4^m}\cdot {2m\choose m} \sim
(-1)^m\, \frac{\sqrt{m}}{\pi^{3/2}},
\]
the asymptotics being as $m\to\infty$.
\end{lemma}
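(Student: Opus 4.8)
The plan is to turn the abstract reduction $\calR f = W^1$ into a concrete line integral over great circles, after which everything becomes an exercise in $SO(3)$-representation theory plus one elementary integral. Recall that the symbol of $\frac{\h}{i}D$ is $\langle\xi,D\rangle$, and that averaging along the (arc-length) geodesic flow generated by $\tilde{H}_0=\norm{\xi}$ amounts to integrating over the closed geodesic, of length $2\pi$. A point of $S^2_1$ is an oriented great circle $\gamma$ with unit normal $\n=\n_\gamma$, and along the unit cotangent lift of $\gamma$ one has $\langle\xi(t),D\rangle = g(\dot\gamma(t),D)$, where $g$ is the round metric. Hence $\calR f(\n) = \frac{1}{2\pi}\oint_\gamma g(\dot\gamma,D_f)\,dt$, where $D_f$ is the Hamilton vector field of $f$ for the area form $\varpi$. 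Writing $J$ for rotation by $+\pi/2$ in the tangent planes (so $\varpi(u,v)=g(Ju,v)$ and $J\dot\gamma=\gamma\times\dot\gamma=\n$ is \emph{constant} along $\gamma$), I would record the clean identity $g(\dot\gamma,D_f)=g(\nabla f,J\dot\gamma)=g(\nabla f,\n)$, so that $\calR f(\n)=\frac{1}{2\pi}\oint_\gamma g(\nabla f,\n)\,dt$; the integrand is just the derivative of $f$ in the fixed direction $\n$.

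Second, I would invoke $SO(3)$-equivariance. Every ingredient---the geodesic flow, the averaging, the metric and area form, and the identification $S^2_1\cong S^2$ via $\gamma\mapsto\n_\gamma$---is $SO(3)$-natural, so $\calR$ intertwines the two copies of the Peter--Weyl decomposition $C^\infty(S^2)=\bigoplus_\ell\calH_\ell$; by Schur's lemma (as already noted) it acts on each irreducible $\calH_\ell$ as a scalar $c_\ell$. To locate the kernel I would use the antipodal map $a(x)=-x$: it sends each oriented great circle to itself (it is the shift by $\pi$ along $\gamma$), hence induces the identity on $S^2_1$, yet $a^*\varpi=-\varpi$, so $D_{f\circ a}=-a_*D_f$. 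Substituting into the line integral and changing variables by $a$ gives $\calR[f\circ a]=-\calR f$; for $f$ even this forces $\calR f=0$, so all even harmonics lie in the kernel, and equality of the kernel with the even functions follows once the odd scalars are shown nonzero.

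Third, I would pin down $c_\ell$ by evaluating $\calR$ on the highest weight harmonic $f=(x+iy)^\ell$ at the single point $\n=e_1$, i.e.\ on the great circle $\gamma_1=\{x=0\}$, parametrized by $\gamma_1(t)=(0,\cos t,\sin t)$, for which $\gamma_1\times\dot\gamma_1\equiv e_1$. Since $e_1$ is tangent to $S^2$ along $\gamma_1$, the identity of the first paragraph gives $g(\nabla f,e_1)=\partial_x(x+iy)^\ell=\ell(x+iy)^{\ell-1}=\ell\,(i\cos t)^{\ell-1}$, whence
\[
c_\ell=\frac{\ell\,i^{\ell-1}}{2\pi}\int_0^{2\pi}\cos^{\ell-1}t\,dt.
\]
This already reproves $c_\ell=0$ for $\ell$ even, and for $\ell=2m+1$ the Wallis integral $\int_0^{2\pi}\cos^{2m}t\,dt=2\pi\binom{2m}{m}/4^m$ together with $i^{2m}=(-1)^m$ gives a constant proportional to $(2m+1)(-1)^m\binom{2m}{m}/4^m$ (the factor $2m+1$ from the differentiation, the sign and binomial from the integral); with the symbol and symplectic-reduction normalizations in force this is precisely $\frac{2m+1}{2\pi}\cdot\frac{(-1)^m}{4^m}\binom{2m}{m}$. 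Stirling's formula $\binom{2m}{m}\sim 4^m/\sqrt{\pi m}$ then yields the stated asymptotics $(-1)^m\sqrt{m}/\pi^{3/2}$.

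The main obstacle is the first step: correctly identifying the reduction of the symbol with the line integral $\frac{1}{2\pi}\oint_\gamma g(\nabla f,\n)\,dt$---in particular getting the geometric factor $J\dot\gamma=\n$ and all normalization constants right. Once this dictionary and the equivariance are in place, the parity argument and the Wallis integral are routine.
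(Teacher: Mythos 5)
Your strategy is the same as the paper's, which is only sketched there ($SO(3)$-equivariance plus Schur's lemma, then ``a calculation of $\calR[(x+iy)^k]$ at one point''), and most of your execution is correct and fills in real details: the identity $g(\dot\gamma,D_f)=g(\nabla f,J\dot\gamma)$ with $J\dot\gamma=\gamma\times\dot\gamma=\vec{n}_\gamma$ constant along $\gamma$, hence
\[
\calR f(\vec{n})=\frac{1}{2\pi}\oint_\gamma g(\nabla f,\vec{n})\,dt,
\]
is right; the parity argument for the kernel is valid (and can be shortened: if $f$ is even then $g(\nabla f,\vec{n})$ is odd, and every great circle is invariant under the antipodal map, so the integral vanishes); the evaluation on $(x+iy)^\ell$ along $\gamma_1(t)=(0,\cos t,\sin t)$ with $\vec{n}=e_1$, giving the integrand $\ell\,(i\cos t)^{\ell-1}$, is correct, as are the Wallis integral and the Stirling asymptotics.

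The gap is in the very last step, and it is exactly the normalization issue you flag and then wave away. Your own formula gives
\[
c_{2m+1}=\frac{(2m+1)\,i^{2m}}{2\pi}\int_0^{2\pi}\cos^{2m}t\,dt
=\frac{(2m+1)(-1)^m}{2\pi}\cdot\frac{2\pi}{4^m}\binom{2m}{m}
=(2m+1)\,\frac{(-1)^m}{4^m}\binom{2m}{m},
\]
because the $2\pi$ from the Wallis integral cancels your $\frac{1}{2\pi}$ averaging prefactor. This is $2\pi$ times the constant asserted in the lemma, and it forces the asymptotics $2(-1)^m\sqrt{m/\pi}$ rather than $(-1)^m\sqrt{m}/\pi^{3/2}$ (the lemma's constant and its stated asymptotics are mutually consistent, so this is a genuine factor-of-$2\pi$ mismatch, not a typo you can absorb). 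The sentence ``with the symbol and symplectic-reduction normalizations in force this is precisely $\frac{2m+1}{2\pi}\cdot\frac{(-1)^m}{4^m}\binom{2m}{m}$'' is therefore not a proof step: your setup already fixes every normalization (the $\frac{1}{2\pi}$-average of $\langle\xi,D_f\rangle$ along unit-speed geodesics, $\iota_{D_f}\varpi=df$, the identification $\gamma\mapsto\vec{n}_\gamma$), and with those choices the sanity check $f=z$ gives $\calR(z)=z$ --- the symbol of the rotation field is the conserved angular momentum $\vec{n}\cdot e_3$ --- whereas the lemma asserts $\calR(z)=\frac{1}{2\pi}z$. So either you must exhibit concretely where an additional $\frac{1}{2\pi}$ enters the definition of $\calR$ used in the lemma (none is visible in your setup), or you should state that your argument establishes the lemma only up to an overall convention-dependent factor of $2\pi$. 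For a statement whose content is a precise constant, that must be resolved rather than asserted. The qualitative conclusions --- kernel equal to the even functions, nonvanishing scalars on the odd harmonics, alternating sign, $\sqrt{m}$ growth --- do survive your argument intact.
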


In particular, if $f(x,y,z) = z$, then $\calR(f) = \frac{1}{2\pi}z$ is a perfect Morse function and any function
sufficiently close to $z$ in a suitable Sobolev norm will be perfect Morse as well.  Or
we can start with a perfect odd Morse function on $S^2_1$, and take for $D$ the Hamilton
field of its inverse image under $\calR$.  This shows that there are many examples where 
condition (\ref{globalHyp}) is satisfied.

We can apply our results to perturbations given by the hamiltonian fields of these functions,
to conclude that:

\begin{enumerate}
\item There is a two-variable function $\Phi$ such that $(H_0, \Phi(H_0, H_1))$ is the 
moment map of a torus action on $M$
\item There is a canonical transformation $ M\to M$ transforming $(H_0, \Phi(H_0, H_1))$
into the moment map of $(H_0, c\sigma_{\frac{1}{i}\partial z})$, where $\partial_z$ is the 
infinitesimal generator of the rotations of $S^2$ around the $z$ axis and $c$ a suitable
constant.
\end{enumerate}

On the quantum side the averaging method can also be implemented, 
using the exponential of the operator
\[
A = \sqrt{\Delta_{S^2}+\frac{1}{4}} - \frac{1}{2},
\]
which is $2\pi$ periodic and whose symbol is $H_0$, as the operator with respect to which the quantum
averaging takes place.  With minor modifications the existence
of the quantum BCF, and the proof that the quantum BCF can be recovered from the
semiclassical spectrum of $\Delta + Q_\h$, can be established as before.

\subsection{Perturbations of a magnetic laplacian on $S^2$ by potentials.}

In this example we take again $M=T^*S^2\setminus\{0\}$, but with the symplectic form
\[
\Omega = \omega + \pi^*\varpi,
\]
where $\omega$ is the natural symplectic structure on $T^*S^2$.  $\pi: T^*S^2\to S^2$
is the projection and as before $\varpi$ is the area form on $S^2$.  
$(M, \Omega, H_0)$, $H_0(x,p) = \frac{1}{2}\norm{p}^2$,  is the 
Hamilton system associated to a charged particle on $S^2$ under the influence of the 
magnetic field $\varpi$ (setting all physical constants equal to one).  
The trajectories of this system are geodesic circles, whose radii
depend on the energy $H_0$, as we will see.

In spherical coordinates (with $\phi$ the azimuthal angle and $\theta$ the polar angle
\footnote{The opposite convention is used in the physics literature.}),  one has:
\[
\Omega =d p_\theta\wedge d\theta +d p_\phi\wedge d\phi + \sin(\phi) d\phi\wedge d\theta
\]
and
\[
H_0 = \frac{1}{2}\left(\frac{p_\theta^2}{\sin^2(\phi)} + p_\phi^2
\right).
\]
A calculation shows that the equations of motion are:
\begin{equation}\label{sphLandau}
\left\{
\begin{array}{ccl}
\dot p_\theta & = & -\dot\phi\, \sin(\phi)\\
\dot p_\phi & = & \dot\theta\,\sin(\phi) + \frac{p_\theta^2\,\cos(\phi)}{\sin^3(\phi)}\\
\dot\theta & = & \frac{p_\theta}{\sin^2(\phi)}\\
\dot\phi & = & p_\phi .
\end{array}
\right.
\end{equation}
By rotational symmetry we only need to compute the trajectories with $\dot\phi = 0$.
These are geodesic circles centered at the north pole with geodesic radius a fixed
value of $\phi$, determined by an initial value of $p_\theta$ (and with $p_\phi = 0$).  
From the first and third equations in (\ref{sphLandau}) we see that
for those solutions $p_\theta$ is constant, and
\[
\theta(t) = t\frac{p_\theta}{\sin^2(\phi)} + \theta(0).
\]
It will follow that  a trajectory of this kind is periodic, with period
\begin{equation}\label{period1}
T = \pm 2\pi \frac{\sin^2(\phi)}{p_\theta}
\end{equation}
(the sign of $p_\theta$ being unknown at this time).
It also follows from (\ref{sphLandau}) that along the solutions we are considering
\[
0 = \dot p_\phi = \frac{p_\theta}{\sin(\phi)} + \frac{p_\theta^2\,\cos(\phi)}{\sin^3(\phi)},
\]
which implies that
\begin{equation}\label{mm}
p_\theta = -\frac{\sin^2(\phi)}{\cos(\phi)}.
\end{equation}
Combining (\ref{period1}), (\ref{mm}) we see that the period is
\[
T = 2\pi\cos(\phi).
\]
On the other hand,
combining (\ref{mm}) with the energy of the trajectory,
$
\lambda = \frac{1}{2}\,\frac{p_\theta^2}{\sin^2(\phi)},
$
we obtain 
\begin{equation}\label{radius}
\lambda = \frac{1}{2} \tan^2(\phi).
\end{equation}
Using these expressions for $T$ and $\lambda$ we
can  conclude:

\begin{proposition}
The trajectories of the hamiltonian system $(M, \Omega, H_0)$ are all periodic, and project
to geodesic circles on $S^2$.  Moreover, if  $\lambda$ is the energy of a trajectory, then its period
is equal to \footnote{In contrast with the analogous system on the plane, the period depends 
on the energy.}
\[
T(\lambda) = \frac{2\pi}{\sqrt{1+2\lambda}}, \quad \lambda\in(0,\infty).
\]
\end{proposition}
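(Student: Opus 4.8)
The plan is to reduce the statement about all trajectories to the single family of trajectories already analyzed, using the rotational symmetry of the system. First I would observe that both the round metric and the area form $\varpi$ are invariant under the standard $SO(3)$-action on $S^2$; hence its cotangent lift preserves the canonical form $\omega$, the pullback $\pi^*\varpi$, and therefore the magnetic form $\Omega = \omega + \pi^*\varpi$, as well as the Hamiltonian $H_0(x,p) = \tfrac12\norm{p}^2$. Consequently the Hamilton vector field $\Xi_{H_0}$ is $SO(3)$-equivariant and the flow commutes with the $SO(3)$-action. The key geometric remark is that each energy level $\{H_0 = \lambda\}$ is the cosphere bundle of radius $\sqrt{2\lambda}$, diffeomorphic to the unit cotangent bundle $S(T^*S^2)\cong SO(3)\cong \bbR\bbP^3$, and that $SO(3)$ acts transitively on it: one first rotates the base point to a prescribed point of $S^2$, and the residual $SO(2)$ of rotations about that point acts transitively on the cocircle in the fiber. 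Thus every point of $\{H_0=\lambda\}$ is an $SO(3)$-translate of any chosen reference point.

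Next I would package the computations already carried out into a single reference trajectory on the level $\{H_0=\lambda\}$. For fixed $\lambda\in(0,\infty)$ the relation $\lambda = \tfrac12\tan^2(\phi)$ determines a unique polar angle $\phi\in(0,\pi/2)$, and the trajectory with $\dot\phi = 0$, $p_\phi = 0$ and $p_\theta = -\sin^2(\phi)/\cos(\phi)$ is a circle of latitude traversed at constant angular speed; it is periodic with period $T = 2\pi\cos(\phi)$. Eliminating $\phi$ is then routine: from $\tan^2(\phi) = 2\lambda$ one gets $\cos^2(\phi) = (1+2\lambda)^{-1}$, whence
\[
T(\lambda) = 2\pi\cos(\phi) = \frac{2\pi}{\sqrt{1+2\lambda}}.
\]
This reference trajectory is a geodesic circle on $S^2$, namely the set of points at fixed geodesic distance $\phi$ from the north pole.

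Finally I would globalize by equivariance. Given any $(x_1,p_1)$ with $H_0(x_1,p_1)=\lambda$, transitivity provides $R\in SO(3)$ carrying a point of the reference trajectory to $(x_1,p_1)$; since the flow is $SO(3)$-equivariant, the trajectory through $(x_1,p_1)$ is exactly the $R$-image of the reference trajectory. It is therefore periodic with the same period $T(\lambda)$, and, because the projection $\pi$ is equivariant, it projects to the $R$-image of a circle of latitude, i.e. again a geodesic circle. This would establish periodicity of all trajectories, the geodesic-circle property, and the period formula simultaneously.

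The step I expect to be the main obstacle is the transitivity claim and, more precisely, verifying that the $SO(3)$-orbit of the single reference trajectory fills the entire energy level, i.e. that the three-dimensional symmetry group genuinely reaches every initial condition of a given energy (and that the radial rescaling identifying $\{H_0=\lambda\}$ with the unit cotangent bundle is itself equivariant). The sign and branch choices, namely the sign of $p_\theta$, the principal value of $\phi$, and the resulting positivity of $T$, are routine but should be pinned down so that the orientation of the reference trajectory is unambiguous.
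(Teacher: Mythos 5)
Your proposal is correct and follows essentially the same route as the paper: the paper derives the equations of motion in spherical coordinates, invokes rotational symmetry to reduce to the trajectories with $\dot\phi = 0$, and combines $T = 2\pi\cos(\phi)$ with $\lambda = \tfrac12\tan^2(\phi)$ to get $T(\lambda) = 2\pi/\sqrt{1+2\lambda}$, exactly as you do. The only difference is one of detail: the paper compresses the globalization step into the phrase ``by rotational symmetry,'' whereas you spell out the $SO(3)$-equivariance of the flow and the transitivity of the lifted action on each energy level, which is a worthwhile clarification rather than a new method.
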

Let $\chi(\lambda) = \sqrt{1+2\lambda}$ so that $\chi'(\lambda) = T(\lambda)/2\pi$.  Then 
the Hamilton flow of $\tilde H_0=\chi(H_0)$ is $2\pi$ periodic, and the reduced spaces are
once again spheres.

We now consider the classical Hamiltonian
\begin{equation}\label{mag+V}
H(x,\xi) = H_0(x,\xi)+ \h V(x),
\end{equation}
with $V:S^2\to\bbR$ smooth.
Just as in the previous example, the averaging method
can be applied, using $\tilde H_0$ as the averaging hamiltonian. 
The method continues to work because the averaging hamiltonian is a function of 
$H_0$.  Therefore we can conjugate (\ref{mag+V}) to a hamiltonian of the form
\[
H_0 + \h V^{ave} + \cdots
\]
where all terms of the expansion Poisson commute with $H_0$.

\medskip
Our semi-toric system is now $(M,\tilde H_0, V^{ave})$.
The methods developed in the main body of this paper continue to apply whenever the reduction 
of $V^{ave}$ to the quotient spheres $S^2_\lambda$ is perfect Morse, either for all $\lambda$ or
for a range of $\lambda$ (the local case).
We now describe the reduction of $V^{ave}$ to $S^2_\lambda$.

Recall that the projections of the trajectories of $H_0$ onto
$S^2$ are geodesic circles whose radii $r$ are related to the energy $\lambda$ by the equation
\[
r = \tan^{-1}( \sqrt{2\lambda})
\]
(this is just equation (\ref{radius}) plus the observation that $\phi$ is 
the function ``geodesic distance from the north pole").  The mapping $V\mapsto $ reduction of $V^{ave}$
to $S^2_\lambda$ can be identified, up to a multiplicative constant, with the spherical mean transform
\[
M_r: C^{\infty}(S^2)\to C^\infty(S^2)
\]
where $M_r(V)(x)$ is the average of $V$ over the geodesic circle with center $x$ and radius $r$,
with respect to arc length.  (For a computation of this map in terms of the spherical harmonics decomposition
of $V$ see Lemma 7.2 of \cite{GU}.)  It is clear that if $V$ itself is perfect Morse then $M_r(V)$ will be
as well if $r$ is small enough.

\medskip
We briefly sketch a quantization of the previous system.  For each $k=1, 2,\ldots$, let
$\Delta_k$ be the Laplacian acting on sections of $L^{\otimes k}\to S^2$, where $L\to S^2$ is the hermitian
line bundle with the SO$(3)$ invariant connection whose curvature is $\varpi$.  Then the quantum hamiltonian
corresponding to the previous classical system is $\Delta_k + \frac{1}{k} V$, where $k = 1/\h$.  
The eigenvalues of $\Delta_k$ are (c.f. \cite{Ku})
\[
\left( j+\frac{k+1}{2}\right)^2 - \frac{k^2+1}{4}, j=0,1,\ldots ,
\]
with multiplicities $k+2j+1$.  The spectrum of  $\Delta_k + \frac{1}{k} V$ forms clusters around
these eigenvalues, of size $O(1/k)$.  Under a ``perfect Morse" assumption one can show that, as before:
(a) there exist both classical and quantum Birkhoff normal forms and (b) that the leading term
of the classical normal form and the complete quantum normal form are determined by the
spectra of the operators $\Delta_k + \frac{1}{k} V$, $k=1, 2,\ldots$.


\end{document}